\newtheorem{theorem}{Theorem}[section]
\newtheorem{lemma}[theorem]{Lemma}
\newtheorem{corollary}[theorem]{Corollary}
\newtheorem{proposition}[theorem]{Proposition}
\theoremstyle{remark}
\newtheorem{example}[theorem]{Example}
\newtheorem{remark}[theorem]{Remark}
\theoremstyle{definition}
\newtheorem{definition}[theorem]{Definition}
\newcommand{\dR}{\ensuremath{\mathbb{R}}} 
\newcommand{\R}{\dR}
\begin{document}

\title{On global H{\"o}lder estimates for optimal transportation}

\author{Alexander V. Kolesnikov}

\begin{abstract}
We generalize a well-known result of L.~Caffarelli
on Lipschitz estimates for optimal transportation $T$ between uniformly log-concave probability
 measures.
Let $T : \R^d \to \R^d$ be an optimal transportation pushing forward
$\mu = e^{-V}dx$ to $\nu = e^{-W}dx$.
Assume that 1) the second differential quotient of $V$
can be estimated from above by a power function,
2) modulus of convexity of $W$ can be estimated  from below by $A_q |x|^{1+q}$, $q \ge 1$.
Under these assumptions we show that $T$
is globally H{\"o}lder with a dimension-free coefficient.
In addition,  we study optimal transportation $T$ between  $\mu$
and the uniform measure on a bounded convex  set $K \subset \R^d$.
We get estimates for the Lipschitz constant of $T$ in terms of $d$, $\mbox{diam(K)}$ and
$D V, D^2 V$.
\end{abstract}

\maketitle
Keywords: optimal transportation,
Monge-Amp{\`e}re equation, H{\"o}lder apriori estimates, convex bodies, log-concave measures, isoperimetric
and concentration inequalities

\section{Introduction}

According to a well-known result of L.~Caffarelli \cite{Caf}
any optimal transportation mapping $T$ pushing forward
the standard Gaussian measure $\gamma$ to a
probability measure $e^{-W} \cdot \gamma$ with convex $W$ is $1$-Lipschitz.
This remarkable observation allows to recover many interesting results on analytic properties
of uniformly convex measures. For instance, the Bakry-Emery condition for the log-Sobolev inequality ("flat"
part) \cite{bakre85dh}, Bakry-Ledoux comparison theorem \cite{BakLed},
some correlation inequalities \cite{Caf}, \cite{GH2}.
Some recent generalizations of \cite{Caf}  can be found in \cite{Vald}.

The estimates of this type go back to A.V.~Pogorelov \cite{Pog}
(see also "Pogorelov lemma" in  \cite{Guit}).
According to his result, any the smooth solution of the Monge-Amp{\`e}re equation
$\det D^2 \varphi=1$ in a smooth domain $\Omega \subset \R^d$ satisfies
$$
C_1(\Omega,\Omega') \le D^2 \varphi(x) \le C_2(\Omega,\Omega'),
$$
$x \in \Omega'$, where $\Omega' \subseteq \Omega$ is another domain.
The H{\"o}lder estimates play
fundamental role in the regularity theory of partial differential equations.
The overview of results on a priori estimates for
fully nonlinear differential equations see in \cite{Kryl2}, \cite{CafCab}.
For a more special case of the Monge-Amp{\`e}re equation the readers are advised to consult
\cite{Guit}, \cite{Urbas}, \cite{Vill}.

On the other hand, mass transportation method is widely used in probability
for establishing diverse isoperimetric and concentration properties.
Given a "good" measure $\mu$ with known isoperimetric properties (e.g. Gaussian, product measures)
and another measure $\nu$ one can try to transform
$\mu$ into $\nu$ and deduce the desired information about $\nu$
from the properties of the mass transport (Lipschitz or H{\"o}lder estimates).
Let us indicate some typical situations

\newpage
\begin{itemize}
\item[1)] $\mu$ is Gaussian and $\nu$ is another
product measure (see \cite{Tal}, \cite{Led})
\item[2)] $\nu$ is the uniform measure on
a convex set
and $\mu$ is log-concave with the same modulus of convexity
(see, for instance, \cite{BobLed}, \cite{Led}, \cite{MilSod},  and references therein)
\item[3)] $\mu$ is Gaussian and $D^2 W \ge C$ (situation of the Caffarelli theorem).
\end{itemize}

The paper is organized as follows.
In Section 2 we consider the  optimal  transportation $T$ pushing forward $e^{-V} dx$ to $e^{-W} dx$. Assume that $V$ and $W$ satisfy
$$
V(x+ y) + V(x-y) -2V(x) \le A_p |y|^{p+1},
$$
$$
W(x+ y) + W(x-y) -2W(x) \ge A_q |y|^{q+1}
$$
with $0 \le p \le 1 \le q$ and some positive $A_p, A_q$.
We show that $T$ is globally $\frac{p+1}{q+1}$-H{\"o}lder:
$$
|T(x)-T(y)| \le C |x-y|^{\frac{p+1}{q+1}},
$$
with $C$ depending only on $p, q, A_p, A_q$.

In particular, this implies the following. Let $\nu = e^{-W} dx$  be a probability measure satisfying
$$
W(x+ y) + W(x-y) -2W(x) \ge C_{\beta} |y|^{2 \beta}
$$
with $\beta \ge 1$.
Then there exists $C$ depending only on $\beta, C_{\beta}$ such that
 $\nu$ satisfies the following concentration inequality:
$$
\nu(B_r) \ge 1-e^{-C|r|^{2\beta}},
$$
for every $B \subset \R^d$ with $\nu(B) \ge \frac{1}{2}$, $B_r = \{y: |x-y| \le r, x \in B\}$.
This is a previously known consequence of
a modified log-Sobolev inequality proved by Bobkov and Ledoux \cite{BobLed}.
The Bobkov-Ledoux result has been
generalized in \cite{CGH} by transportation arguments.
The corresponding isoperimetric inequalities
have been proved in \cite{MilSod} by localization techniques.

In Sections 3-4 we prove estimates of the following type:
$$
\|D^2 \varphi \| \le f(\nabla \varphi),
$$
where $f$ is a non-negative function and $\nabla \varphi$ is the optimal
mass transport pushing forward a smooth measure $\mu$ to
 $\nu = \frac{1}{\lambda(K)} \cdot \lambda|_{K}$, where $K \subset \R^d$ is convex
and $\lambda$ is the Lebesgue measure.
In particular, we show that
any uniform measure $\nu$ on a bounded convex set $K$ is a Lipschitz image of the standard Gaussian measure
which Lipschitz constant does not exceed
$ C \sqrt{d} \cdot \mbox{diam} (K)$ with some universal $C$.

 After  publishing online a preliminary version of this paper, the author
obtained a remark from the participants of the Convex Analysis seminar in Tel-Aviv University.
 They noticed that the proof of Proposition \ref{Stein-lem}
 can be significantly simplified for the case of convex functions.
 The arguments are presented in Appendix.  Consider  a log-concave measure $\nu = e^{-W} dx$
 such that
$$
W(x+y)+W(x-y) - W(x) \ge \delta(|y|)
$$
with a non-negative increasing function $\delta$.
Then the optimal transportation of the standard Gaussian measure to
$\nu$
 satisfies the following:
 $$
|\nabla \varphi(x) - \nabla \varphi(y)| \le
8 \delta^{-1}(4 |x-y|^2).
$$
This implies, in particular, that $\nu$ admits the following
 dimension-free concentration property:
$$
\nu\bigl( B_{r}  \bigr) \ge  1- \frac{1}{2} \exp \Bigl( -\frac{1}{8} \ \delta(r/8) \Bigr)
$$
for every $B \subset \R^d$ with
$\nu(B) \ge  1/2$, $B_r = \{y: |x-y| \le r, x \in B\}$.

The author expresses his gratitude to the colleagues from the Tel-Aviv university, especially
to Ronen Eldan and Sasha Sodin for communicating the proof of Lemma \ref{Sodin-lem} and to Emanuel Milman
for pointing out a mistake in the earlier version.

\section{Global H{\"o}lder estimates}

We deal throughout the paper with the standard finite-dimensional Euclidean space.

We recall that for every couple of probability measures
$\mu_1 = \rho_1 dx$, $\mu_2 = \rho_2 dx$ there exists
a mapping $T : \R^d \to \R^d$ pushing forward $\mu_1$
to $\mu_2$ which has the form $T = \nabla \varphi$, where $\varphi$ is a convex function
(see \cite{Vill} for details). For smooth $T$ one has the following
change of variables formula (Monge-Amp{\`e}re equation):
$$
\rho_2(\nabla \varphi) \det D^2 \varphi = \rho_1.
$$

All the measures considered below are supposed to have a convex support
of positive Lebegue measure.

Recall that a probability measure $\mu$ is called log-concave if
it has the form $\mu = e^{-V} dx$ with a convex $V$.
Finally, recall that a mapping $T : \R^d \to \R^d$ is $M$-Lipschitz if
$$\frac{|T(x)-T(y)|}{|x-y|} \le M.$$ By the Rademacher theorem $T$
is almost everywhere differentiable with $\| DT\| \le M.$
A mapping $T$ is $\alpha$-H{\"o}lder if there exists $0<\alpha \le 1$ such that
$$\frac{|T(x)-T(y)|}{|x-y|^{\alpha}} \le M.$$

We start with the one-dimensional case.
Let $T=\varphi'$ be the optimal transportation, pushing forward
$\mu = e^{-V} dx$ to $\nu = e^{-W} dx$ with $\mbox{supp}(\nu)=[a,b]$, where $V: \R \to \R$ and $W : (a,b) \to \R$
satisfy
\begin{equation}
\label{Vp}
|V'(x)-V'(y)| \le C_{p} |x-y|^{p},
\end{equation}
\begin{equation}
\label{Wq}
(x-y) (W'(x)-W'(y)) \ge C_{q} |x-y|^{q+1}
\end{equation}
for some $0< p \le 1$,  $1 \le q $.

Measures $\mu$ and $\nu$ are supposed to be
probability measures.
We are looking for a maximum point $(t_0,x_0)$ of the following
function
$$
F(t,x) = \frac{\varphi'(x+t)-\varphi'(x)}{|t|^{\alpha}}
\mbox{sign}(t), \ 0 < \alpha  \le 1.
$$

\begin{remark}
The arguments below are non-rigorous.
A more general version of the result (with a rigorous proof) see in Corollary \ref{hoelder-cor}.
\end{remark}

Setting $t'_0=-t_0$ and $x'=x_0-t_0$ if necessary, we may assume that
$t>0$. Differentiating in $x$ at the maximum point yields
$$
\varphi''(x_0+t_0)=\varphi''(x_0),
$$
$$
\varphi'''(x_0+t_0)\le \varphi'''(x_0).
$$
Hence
\begin{equation}
\label{1610}
\frac{\varphi'''(x_0+t_0)}{\varphi''(x_0+t_0)} -
\frac{\varphi'''(x_0)}{\varphi''(x_0)} \le 0.
\end{equation}
Taking derivative in $t$ we obtain
$$
\varphi''(x_0+t_0) = \frac{\alpha}{t_0} \Bigl( \varphi'(x_0+t_0) - \varphi'(x_0)\Bigr).
$$
Let us differentiate the change of variables formula.
One gets
$$
\frac{\varphi'''}{\varphi''}
- W'(\varphi')\varphi'' = -V'.
$$
Using (\ref{1610}) we obtain
\begin{align*}
V'(x_0+t_0) -V'(x_0)&
\ge
\bigl( W'(\varphi(x_0+t_0)) - W'(\varphi'(x_0)) \bigr)
\varphi''(x_0+t)
\\&
=
\frac{\alpha}{t}\Bigl[ W'(\varphi(x_0+t_0)) - W'(\varphi'(x_0)) \Bigr]
 \Bigl( \varphi'(x_0+t_0) - \varphi'(x_0)\Bigr).
\end{align*}
Then it follows from the assumptions on $V$ and $W$ that
$$
C_p \ t_0^{p+1} \ge \alpha C_q \ |\varphi'(x_0+t_0) - \varphi'(x_0)|^{q+1}.
$$
Hence
$$
\Bigl( \frac{C_p}{\alpha C_q} \Bigr)^{\frac{1}{q+1}} \  \ge  \ |\varphi'(x_0+t_0) - \varphi'(x_0)| \cdot t_0^{-\frac{p+1}{q+1}}.
$$
Thus we get the following statement:

 \begin{theorem}
 \label{1-hoelder}
 Assume that $V: \R \to \R$ and $W: \R \to \R$
are continuously differentiable functions
satisfying (\ref{Vp}) and (\ref{Wq}) for some
$0 \le p \le 1$, $q \ge 1$.
Set:
$$
0 < \alpha := \frac{p+1}{q+1} \le 1.
$$
Then
$$
|\varphi'(x+t)-\varphi'(x)| \le \Bigl( \frac{(q+1) C_p }{(p+1) C_q} \Bigr)^{\frac{1}{q+1}}
|t|^{\alpha}.
$$
\end{theorem}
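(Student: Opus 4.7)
The plan is to make rigorous the formal maximum-principle argument laid out just before the theorem. The relation $V = W\circ\varphi' - \log\varphi''$ coming from the Monge-Amp\`ere equation in one variable, differentiated once, gives
$$\frac{\varphi'''}{\varphi''} = W'(\varphi')\,\varphi'' - V',$$
and the whole calculation is driven by comparing this identity at $x_0$ and $x_0+t_0$. The job is to manufacture an honest maximizer at which these pointwise identities are meaningful.

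I would begin by regularizing. Replace $V$ with $V_\epsilon := V*\rho_\epsilon + \epsilon x^2$ and $W$ with $W_\epsilon$ constructed similarly, where $\rho_\epsilon$ is a standard mollifier, and renormalize to probability measures. Both (\ref{Vp}) and (\ref{Wq}) survive mollification with unchanged constants, while the $\epsilon x^2$ piece adds uniform strong convexity (helping (\ref{Wq})) and a tame Lipschitz contribution to $V'$ absorbed by the limit. In the regularized problem $\varphi_\epsilon\in C^\infty$ with $\varphi_\epsilon''$ locally bounded above and bounded below away from $0$, so every derivative computation is legitimate. Since in one dimension $\varphi_\epsilon' = F_{W_\epsilon}^{-1}\circ F_{V_\epsilon}$ for the cumulative distribution functions, convergence of the marginals gives convergence of the transports, so any uniform estimate for the regularized problem descends to $\varphi$ as $\epsilon\to 0$.

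Next, for the regularized problem, consider
$$F(t,x)=\frac{\varphi'(x+t)-\varphi'(x)}{t^\alpha}, \qquad F_\delta(t,x)=F(t,x)-\delta\bigl(t+t^{-1}+x^2\bigr),$$
with $t>0$; the case $t<0$ is handled by the symmetry $F(-t,x+t)=F(t,x)$. The penalty drives $F_\delta\to -\infty$ as $t\to 0^+$, $t\to\infty$, or $|x|\to\infty$, so $F_\delta$ attains its supremum at an interior point $(t_0,x_0)\in(0,\infty)\times\R$. The conditions $\partial_x F_\delta=0$, $\partial_x^2 F_\delta\le 0$, $\partial_t F_\delta=0$ reproduce the three displayed relations in the text modulo explicit $O(\delta)$ corrections. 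Feeding these into the differentiated Monge-Amp\`ere identity and invoking (\ref{Vp})--(\ref{Wq}) yields
$$\alpha\, C_q\,|\varphi'(x_0+t_0)-\varphi'(x_0)|^{q+1}\le C_p\,t_0^{p+1}+O(\delta),$$
i.e.\ $F_\delta(t_0,x_0)^{q+1}\le C_p/(\alpha C_q)+O(\delta)$; since $(t_0,x_0)$ is the global maximum of $F_\delta$, this ceiling is inherited by $F_\delta$ everywhere, and sending first $\delta\to 0$ and then $\epsilon\to 0$ delivers the announced constant $\bigl((q+1)C_p/((p+1)C_q)\bigr)^{1/(q+1)}$.

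The main obstacle I expect is bookkeeping for the penalty: one must verify that $F$ is finite on the open strip for the regularized problem (requiring that $\varphi_\epsilon'$ grow no faster than polynomially, a consequence of (\ref{Vp}) after mollification) and that the $O(\delta)$ errors in the three critical-point relations remain uniform in $(t_0,x_0)$ so that they truly disappear in the limit. Once this localization is secured, the argument collapses to the algebraic manipulation already carried out in the text, with the penalty bridging the gap between the formal computation and a rigorous one.
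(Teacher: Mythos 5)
The first thing to note is that the paper does \emph{not} give a rigorous proof of Theorem~\ref{1-hoelder}: it explicitly labels the one-dimensional maximum computation ``non-rigorous'' and points to Corollary~\ref{hoelder-cor} for the rigorous statement, which is obtained by an entirely different route (the multidimensional argument of Theorem~\ref{hoelder} followed by the heat-semigroup Proposition~\ref{Stein-lem}) and which, moreover, does not recover the explicit constant $\bigl((q+1)C_p/((p+1)C_q)\bigr)^{1/(q+1)}$. So you are attempting something the paper consciously declines to do, and if it worked it would actually sharpen the paper.

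Unfortunately the penalization step has a genuine gap, not just ``bookkeeping''. With the penalty $-\delta(t+t^{-1}+x^2)$, the three critical-point relations at $(t_0,x_0)$ read
$$\varphi''(x_0+t_0)-\varphi''(x_0)=2\delta x_0 t_0^{\alpha},\qquad \varphi'''(x_0+t_0)-\varphi'''(x_0)\le 2\delta t_0^{\alpha},$$
$$\frac{\alpha}{t_0}\bigl(\varphi'(x_0+t_0)-\varphi'(x_0)\bigr)-\varphi''(x_0+t_0)=\delta t_0^{\alpha}(1-t_0^{-2}).$$
These errors are not $O(\delta)$ uniformly: the penalty only gives $\delta x_0^2\le M$, $\delta t_0\le M$, $\delta t_0^{-1}\le M$, so for instance $\delta x_0 t_0^{\alpha}\lesssim \delta^{1/2-\alpha}$, which does not tend to zero when $\alpha\ge 1/2$ (and $\alpha$ ranges over all of $(0,1]$ here). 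Worse, when you feed the perturbed conditions into the differentiated Monge--Amp\`ere identity $\varphi'''/\varphi''-W'(\varphi')\varphi''=-V'$, the perturbations get multiplied by $\varphi''(x_0+t_0)^{-1}$, $\varphi'''(x_0)/\varphi''(x_0)$, and $W'(\varphi'(x_0))$ --- none of which are controlled by the penalty as $\delta\to 0$. So the final step ``$F_\delta(t_0,x_0)^{q+1}\le C_p/(\alpha C_q)+O(\delta)$'' is not justified; the $O(\delta)$ hides quantities that may blow up at the same rate $\delta$ shrinks. To close this you would need a priori bounds on $\varphi''$, $\varphi'''$, $\varphi'$ (hence on $W'\circ\varphi'$) for the regularized problem that are uniform in the penalty parameters, or a penalty tailored so that the error terms pair against controlled quantities. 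As written, the proposal replaces one formal step (assume an interior maximum exists) with another (assume the penalty errors are negligible) without establishing it.
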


\begin{remark}
A closed result has been established in \cite{BarKol}:
any  optimal mapping sending the one-dimensional exponential measure
$\nu = \frac{1}{2} e^{-|x|} dx $ to $e^{-V} \cdot \nu$
with $|V'| \le 1-c$, $c>0$ is $\frac{1}{c}$-Lipschitz.
\end{remark}

We assume that $V$ and $W$ satisfy
\begin{equation}
\label{vpm}
V(x+ y) + V(x-y) -2V(x) \le A_p |y|^{p+1},
\end{equation}
\begin{equation}
\label{wqm}
W(x+ y) + W(x-y) -2W(x) \ge A_q |y|^{q+1},
\end{equation}
for some $0 \le p \le 1$, $1 \le q$, $A_p>0$, $A_q>0$.

\begin{remark}
It can be easily verified that assumptions (\ref{vpm}), (\ref{wqm})
only make sense if $0 \le p \le 1$, $1 \le q$. Indeed, assuming other values of $p, q$,
one can easily deduce that the second derivatives of $W$, $V$ are either zero or infinity
everywhere.
\end{remark}

\begin{theorem}
\label{hoelder}
Let $\mu$ and $\nu$ satisfy  (\ref{vpm}), (\ref{wqm})
for some $0 \le p \le 1$, $1 \le q$. Then  $\varphi$
satisfies
\begin{equation}
\label{var-Hoelder}
\varphi(x+th) + \varphi(x-th)
-2\varphi(x) \le 2\Bigl( \frac{A_p}{A_q}\Bigr)^{\frac{1}{q+1}} t^{1+\alpha}
\end{equation}
for every unit vector $h \in \R^d$
with $\alpha = \frac{p+1}{q+1}$.
\end{theorem}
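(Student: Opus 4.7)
The strategy is to lift the one-dimensional maximum principle argument sketched before Theorem \ref{1-hoelder} to the multidimensional setting. Setting $C := (A_p/A_q)^{1/(q+1)}$, I would show directly that the auxiliary function
$$
F(x,t,h) := \varphi(x+th) + \varphi(x-th) - 2\varphi(x) - 2\,C\, t^{1+\alpha}
$$
is non-positive on $\R^d \times (0,\infty) \times S^{d-1}$; this is precisely (\ref{var-Hoelder}). Assume provisionally that $\varphi$ is $C^3$ and that $F$ attains its supremum at some $(x_0,t_0,h_0)$, and argue by contradiction that this supremum is $\le 0$.

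Write $a := x_0 + t_0 h_0$, $b := x_0 - t_0 h_0$, $c := x_0$. The first-order condition in $x$ forces $\nabla\varphi(a)+\nabla\varphi(b) = 2\nabla\varphi(c)$; setting $y := \nabla\varphi(a)-\nabla\varphi(c) = \nabla\varphi(c)-\nabla\varphi(b)$ this becomes $\nabla\varphi(a) = \nabla\varphi(c)+y$ and $\nabla\varphi(b) = \nabla\varphi(c)-y$. The first-order condition in $t$ reads $\langle \nabla\varphi(a)-\nabla\varphi(b),h_0\rangle = 2C(1+\alpha)t_0^{\alpha}$, i.e.\ $\langle y,h_0\rangle = C(1+\alpha)t_0^{\alpha}$, so in particular $|y| \ge C(1+\alpha)t_0^{\alpha}$. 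The second-order condition in $x$ gives $D^2\varphi(a)+D^2\varphi(b)\le 2D^2\varphi(c)$ as symmetric matrices. Now take the logarithm of the Monge--Amp\`ere equation to get $\log\det D^2\varphi(x) = V(x)-W(\nabla\varphi(x))$, evaluate at $a,b,c$, and combine. By concavity and monotonicity of $\log\det$ on positive-definite matrices together with the second-order condition,
$$
\log\det D^2\varphi(a) + \log\det D^2\varphi(b) - 2\log\det D^2\varphi(c) \le 0.
$$
Hence $W(\nabla\varphi(a))+W(\nabla\varphi(b))-2W(\nabla\varphi(c)) \le V(a)+V(b)-2V(c)$. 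Applying (\ref{vpm}) with increment $t_0 h_0$ on the right and (\ref{wqm}) with increment $y$ on the left gives $A_q|y|^{q+1} \le A_p t_0^{p+1}$, i.e.\ $|y| \le C t_0^{\alpha}$. Combined with $|y|\ge C(1+\alpha)t_0^{\alpha}$ and $\alpha>0$, this is a contradiction, proving $F \le 0$ and thus (\ref{var-Hoelder}).

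The main obstacle is making the standing assumptions rigorous: the domain of $F$ is not compact, a supremum need not be attained, and neither $\varphi$ nor the densities are a priori smooth. I would circumvent these by approximation. Replace $\mu$ and $\nu$ by smooth, compactly-perturbed log-concave densities satisfying (\ref{vpm})--(\ref{wqm}) with constants arbitrarily close to $A_p, A_q$, for which Caffarelli--Urbas-type regularity theory yields a smooth $\varphi$ with enough growth/decay to localize the supremum of $F$ to a compact set (e.g.\ by adding a small term $-\varepsilon(|x|^2 + t^2)$ that vanishes in the limit). The maximum-principle calculation above then applies on each approximant, and the uniform bound survives passage to the limit. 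The author's forward reference to Corollary \ref{hoelder-cor} indicates that this kind of limiting procedure is where the rigorous details of the multidimensional case actually reside.
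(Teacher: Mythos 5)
Your argument takes a genuinely different route from the paper's. The paper fixes $t$ and $h$, observes (following Caffarelli) that $\delta_2\varphi(x)=\varphi(x+th)+\varphi(x-th)-2\varphi(x)\to 0$ as $|x|\to\infty$ once $\nu$ has been restricted to a bounded convex set, and maximizes $\delta_2\varphi$ over $x$ only. At the maximum it obtains $A_q|v|^{q+1}\le A_p t^{p+1}$ exactly as you do, but then finishes with a one-line convexity inequality $\delta_2\varphi(x_0)\le t\langle\nabla\varphi(x_0+th)-\nabla\varphi(x_0-th),h\rangle=2t\langle v,h\rangle\le 2t|v|$, which converts the gradient bound into the desired bound on $\delta_2\varphi$ without ever differentiating in $t$. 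You instead subtract the target bound $2Ct^{1+\alpha}$, maximize jointly over $(x,t,h)$, and use the first-order condition in $t$ to reach a contradiction. Both ideas are legitimate; the paper's is closer to the Caffarelli template and substantially cleaner, because the joint maximization forces you to control the behavior of the auxiliary function near $t=0$ and as $|x|,t\to\infty$ simultaneously, which the fixed-$t$ argument sidesteps entirely.

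Two concrete issues. First, a sign slip: the change of variables gives $\log\det D^2\varphi = W(\nabla\varphi)-V$, not $V-W(\nabla\varphi)$; your final inequality $W(\nabla\varphi(a))+W(\nabla\varphi(b))-2W(\nabla\varphi(c))\le V(a)+V(b)-2V(c)$ is the correct one, so this is self-correcting but should be fixed. Second, and more seriously, the penalization $-\varepsilon(|x|^2+t^2)$ does not obviously close: the perturbed second-order condition becomes $D^2\varphi(a)+D^2\varphi(b)\le 2D^2\varphi(c)+2\varepsilon I$, and after taking $\log\det$ you pick up an error of order $\varepsilon\,\mathrm{Tr}\bigl((D^2\varphi(c))^{-1}\bigr)$; since there is no a priori lower bound on $D^2\varphi$, this term need not vanish as $\varepsilon\to 0$, and the perturbed $x$-condition likewise breaks the identity $\nabla\varphi(a)-\nabla\varphi(c)=\nabla\varphi(c)-\nabla\varphi(b)$ by a term $2\varepsilon x_0$ with $x_0$ a priori unbounded. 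The robust way to localize the supremum, as the paper does, is the decay $\delta_2\varphi(x)\to 0$: for fixed $h$ this makes $F<0$ outside a compact set in $(x,t)$ whenever $\sup F>0$, so the supremum is attained at an interior $t_0>0$ and your contradiction applies. Replace the penalization by that observation and the argument goes through.
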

\begin{proof}
To prove the multi-dimensional case we follow
the arguments of Caffarelli from \cite{Caf}.
We consider the differential quotient
$$
\delta_2 \varphi(x) = \varphi(x+th) + \varphi(x-th)
-2\varphi(x) \ge 0
$$
for some vector $h \in \R^d$ with $|h|=1$.
Without loss of generality we may assume that
the support of $\nu$ is a bounded convex domain.
Note that according to a result from \cite{Caf92}
$\varphi$ is twice continuously differentiable.
It follows from the arguments of \cite{Caf} that
$\lim_{x\to \infty} \delta_2 \varphi(x) = 0$.
Thus there exists a maximum point $x_0$ of $\delta_2 \varphi(x)$.
Differentiating at $x_0$ yields
\begin{equation}
\label{maxim1}
\nabla \varphi(x_0+th) + \nabla \varphi(x_0-th) =2 \nabla \varphi(x_0),
\end{equation}
$$
D^2 \varphi(x_0+th) + D^2 \varphi(x_0-th) \le 2 D^2 \varphi(x_0).
$$
It follows from the concavity of determinant that
\begin{align*}
\det  D^2 \varphi(x_0)
&
\ge
\det \Bigl( \frac{D^2 \varphi(x_0+th) + D^2 \varphi(x_0-th)}{2} \Bigr)
\\&
\ge
\Bigl( \det D^2 \varphi(x_0+th) \ \det D^2 \varphi(x_0-th)\Bigr)^{\frac{1}{2}}.
\end{align*}
Applying the change of variables formula
$
\det D^2 \varphi = e^{W(\nabla \varphi) -V}
$ one finally gets
\begin{align}
\label{V-W}
V(x_0 + th) + V(x_0 - th) & - 2 V(x_0) \ge
\\&
\nonumber
W(\nabla \varphi(x_0 + th)) + W(\nabla \varphi(x_0 -th))
- 2 W(\nabla \varphi(x_0)).
\end{align}

It follows from  (\ref{maxim1}) that
$ v:=
\nabla \varphi(x_0+th) - \nabla \varphi(x_0) = \nabla \varphi(x_0) -  \nabla \varphi(x_0-th).
$
Hence we get by (\ref{V-W}) that
$$
A_p t^{p+1} \ge  A_q |\nabla \varphi(x_0+th) - \nabla \varphi(x_0)|^{q+1}
= A_q | \nabla \varphi(x_0-th) - \nabla \varphi(x_0)|^{q+1}
= A_q |v|^{q+1}.
$$
By convexity of $\varphi$
\begin{align*}
\varphi(x_0+th) + \varphi(x_0-th) - 2\varphi(x_0)
&
\le t \langle \nabla \varphi(x_0+th) - \nabla \varphi(x_0-th), h \rangle
\\&
= 2t \langle v,h \rangle \le 2t |v|.
\end{align*}
Finally
$$
A_p t^{p+1}
\ge A_q \Bigl( \frac{\delta_2 \varphi}{2t}\Bigr)^{q+1}.
$$
\end{proof}

\begin{remark}
For the case $p=q=1$ the estimate is known to be slightly better:
$$
\varphi(x+th) + \varphi(x-th)
-2\varphi(x) \le \Bigl( \frac{A_p}{A_q}\Bigr)^{\frac{1}{2}} t^{2}.
$$
\end{remark}

Our next goal is to  establish H{\"older} continuity of $\nabla \varphi$.

\begin{proposition}
\label{Stein-lem}
Let $u:\R^d \to \R$  be a differentiable function satisfying
$$
|u(x+ y) + u(x-y) -2u(x)| \le C |y|^{\alpha+1},
$$
$\alpha>0$ and $u, |\nabla u|$ be integrable with respect to every Gaussian measure.
Then there exists a constant $C'$ depending only on $\alpha$, $C$ and $d$
such that
$$
|u_v(x+y) - u_v(x)| \le C'|y|^{\alpha}
$$
for every unit vector $v \in \R^d$ and every $x,y \in \R^d$.
\end{proposition}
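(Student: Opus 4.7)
The plan is to regularize $u$ by Gaussian convolution, translate the Zygmund-type hypothesis into a pointwise bound on the Hessian of the mollification, and extract Hölder regularity of $\partial_v u$ by a semigroup bootstrap. Set $p_t(y) = (2\pi t)^{-d/2} e^{-|y|^2/(2t)}$ and $u_t := u * p_t$; the integrability hypotheses on $u$ and $|\nabla u|$ ensure $u_t$ is smooth and that $\partial_v u_t = P_t \partial_v u$ for every unit vector $v$.

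For the first step, since $\partial_v^2 p_t$ is even in $y$ and has zero mean,
$$\partial_v^2 u_t(x) = \tfrac12 \int \bigl(u(x+y) + u(x-y) - 2u(x)\bigr)\, \partial_v^2 p_t(y)\, dy.$$
Applying the hypothesis and rescaling by $y = \sqrt{t}\,z$ gives $|\partial_v^2 u_t(x)| \le C_1\, t^{(\alpha-1)/2}$ for every unit $v$, with $C_1 = C_1(\alpha,C,d)$. Since this bounds the quadratic form $D^2 u_t(x)$ uniformly in $v$, $\partial_v u_t$ is $C_1 t^{(\alpha-1)/2}$-Lipschitz in $x$, uniformly in $v$.

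The second (and key) step converts this spatial bound into a temporal decay for $|\partial_v u_t - \partial_v u|$. Using the semigroup identity $u_{2t} = P_t u_t$,
$$\partial_v u_{2t}(x) - \partial_v u_t(x) = \int \bigl(\partial_v u_t(x-y) - \partial_v u_t(x)\bigr)\, p_t(y)\, dy,$$
so the preceding Lipschitz estimate combined with $\int |y|\, p_t(y)\, dy = O(\sqrt{t})$ yields $|\partial_v u_{2t}(x) - \partial_v u_t(x)| \le C_2\, t^{\alpha/2}$. A dyadic telescoping (the geometric series converges because $\alpha > 0$) upgrades this to $|\partial_v u_t(x) - \partial_v u_s(x)| \le C_3\, t^{\alpha/2}$ for $0 < s \le t$. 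At every Lebesgue point $x$ of $\partial_v u$ one has $P_s \partial_v u(x) \to \partial_v u(x)$ as $s \to 0$, hence $|\partial_v u_t(x) - \partial_v u(x)| \le C_3 t^{\alpha/2}$ for almost every $x$.

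To conclude, fix $x, x' \in \R^d$ and set $t = |x-x'|^2$; decompose
$$|\partial_v u(x) - \partial_v u(x')| \le |\partial_v u(x) - \partial_v u_t(x)| + |\partial_v u_t(x) - \partial_v u_t(x')| + |\partial_v u_t(x') - \partial_v u(x')|.$$
Whenever $x, x'$ are Lebesgue points, the outer terms are $O(|x-x'|^{\alpha})$ by the second step and the middle term is $\le C_1 |x-x'|^{\alpha-1} \cdot |x-x'| = O(|x-x'|^{\alpha})$ by the Lipschitz bound. This yields the Hölder estimate on a set of full measure; since $\partial_v u$ then agrees a.e.\ with a continuous $\alpha$-Hölder function $g$ and $u$ is pointwise differentiable, the identity $u(x+hv)-u(x) = \int_0^h g(x+sv)\, ds$ (holding by absolute continuity along lines) forces $\partial_v u \equiv g$ everywhere. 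The main obstacle is precisely the temporal-decay step: the Hessian bound gives spatial regularity of $u_t$ but no rate of convergence by itself; the identity $u_{2t} = P_t u_t$ is the device that feeds the Lipschitz estimate back into a bound on $\partial_v u_{2t} - \partial_v u_t$, and the geometric summability $\alpha > 0$ closes the loop.
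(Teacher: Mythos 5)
Your proof is correct and follows the same overall strategy as the paper's (which itself adapts Stein's argument): mollify by the heat semigroup, use the Zygmund hypothesis and symmetrization of $\partial_i\partial_j p_t$ to bound $\|D^2 u_t\| \lesssim t^{(\alpha-1)/2}$, obtain temporal decay of $\partial_v u_t$ by dyadic telescoping, and assemble the three-term split at scale $t = |x-x'|^2$. The one place where you genuinely depart from the paper is the temporal step. The paper gets $|P_t u_{x_i} - P_{t/2} u_{x_i}| \lesssim t^{\alpha/2}$ by differentiating the convolution identity $P_s u = P_{s/2}u * p_{s/2}$ and combining two auxiliary estimates, $|\partial_t P_t u| \lesssim t^{(\alpha-1)/2}$ and $|P_t u - u| \lesssim t^{(\alpha+1)/2}$. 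Your route via $u_{2t} = P_t u_t$ and $\partial_v u_{2t}(x) - \partial_v u_t(x) = \int(\partial_v u_t(x-y) - \partial_v u_t(x))\,p_t(y)\,dy$ feeds the Hessian bound directly into a first-moment estimate and dispenses with both auxiliaries; it is a cleaner closing of the same loop. Two small remarks on bookkeeping: when telescoping down from $t$ to an arbitrary $s$ you should note that the residual gap between the last dyadic level $t/2^n$ and $s$ is handled by the same identity $u_{t/2^n} = P_{t/2^n-s}u_s$ with the Lipschitz bound at scale $s$, which still gives $O(s^{\alpha/2}) \le O(t^{\alpha/2})$; and the passage from the a.e.\ H\"older estimate to the everywhere statement is most painlessly justified by observing that $\partial_v u_t = P_t \partial_v u = P_t g \to g$ locally uniformly, so $u_t(x+hv) - u_t(x) = \int_0^h \partial_v u_t(x+sv)\,ds$ passes to the limit and forces $\partial_v u = g$ at every point (the paper omits this step entirely).
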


\begin{proof}
It is known that every bounded $f$ satisfying
$$|f(x+y) + f(x-y)
-2f(x)| \le C|y|^{\alpha+1}$$ admits a
H{\"o}lder continuous derivative: $|\nabla f(x+y) -\nabla f(x)| \le A|y|^{\alpha}$
(see \cite{Stein}, Chapter 5(4), Proposition 9).
Below we give a modification of the proof from \cite{Stein}.

Let us consider the heat semigroup  acting on $u$.
$$
P_t u = \int_{\R^d} u(x-y) p_t(y) dy, \ \ p_t(y)= \frac{1}{(2\pi t)^{d/2}} e^{-\frac{y^2}{2t}}.
$$
We fix an orthonormal basis $\{e_i\}$.
Clearly, it is sufficient to prove the statement for
$$|u_{x_i}(x+h e_j) -  u_{x_i}(x)|$$ with any $1 \le i,j \le d$ and some fixed $h>0$.
Note that
$|u_{x_i}(x+h e_j) -  u_{x_i}(x)|$ does not exceed
$$
|P_t u_{x_i}(x+h e_j) -  P_t u_{x_i}(x)|
+ |u_{x_i}(x+h e_j) -  P_t u_{x_i}(x+he_j)|
+ |u_{x_i}(x) -  P_t u_{x_i}(x)|,
$$
for every $t>0$.

The first term is estimated by
\begin{align*}
P_t & u_{x_i}(x+ h e_j) -  P_t u_{x_i}(x)
=
\int_{0}^{h} \int_{\R^d} u_{x_i x_j}(x+r e_j -y) p_t(y) \ dy \ dr
\\&
= \int_{0}^{h} \int_{\R^d} u(x+r e_j -y) p_t(y)_{y_i y_j} \ dy \ dr
\\&
= \frac{1}{2} \int_{0}^{h} \int_{\R^d} \bigl(u(x+r e_j -y) +
u(x+r e_j +y) - u(x+r e_j)\bigr)  p_t(y)_{y_i y_j} \ dy \ dr
\\&
\le
\frac{1}{2} \int_{0}^{h} \int_{\R^d} |u(x+r e_j -y) +
u(x+r e_j +y) -u(x+r e_j)| | p_t(y)_{y_i y_j}| \ dy \ dr.
\end{align*}

Obviously
$$
p_t(y)_{y_i y_j}
= \frac{1}{t} \Bigl( \frac{y_i y_j}{t} - \delta_{ij}\Bigr) p_t(y).
$$
Hence
$$
|P_t  u_{x_i}(x+ h e_j) -  P_t u_{x_i}(x)|
\le
\frac{C}{2t} \int_{0}^{h}
\int_{\R^d} |y|^{1 +\alpha}   \Bigl| \frac{y_i y_j}{t} - \delta_{ij}\Bigr| p_t(y) \ dy dr.
$$
It is clear by scaling arguments that the latter does not exceed
$$
C_1(C,\alpha,d) h \cdot t^{\frac{\alpha-1}{2}}.
$$
Thus we get for $t=h^2$:
$$
|P_t  u_{x_i}(x+ h e_j) -  P_t u_{x_i}(x)|
\le
C_1(C,\alpha,d) h^{\alpha}.
$$

Let us estimate the remaining terms.
To this end we consider
$\bigl( P_{t} u\bigr)_t$.
One gets
\begin{align*}
&
\bigl( P_{t} u\bigr)_t (x)
= \int_{\R^d} u(x-y) \Bigl[ \frac{1}{(2\pi t)^{d/2}} e^{-\frac{y^2}{2t}} \Bigr]_{t}dy
\\&
= \int_{\R^d} u(x-y) \Bigl( \frac{y^2}{2t^2} -\frac{d}{2t} \Bigr)p_t(y) dy
\\&
=
\frac{1}{2}
\int_{\R^d} \Bigl( u(x-y)  + u(x+y) - 2u(x) \Bigr)\Bigl( \frac{y^2}{2t^2} -\frac{d}{2t} \Bigr)p_t(y) dy
.
\end{align*}
Hence
\begin{equation}
\label{ptt}
|\bigl( P_{t} u\bigr)_t (x)|
\le
\frac{1}{2}
\int_{\R^d} |y|^{\alpha+1} \Bigl| \frac{y^2}{2t^2} -\frac{d}{2t} \Bigr| p_t(y) dy
\le C_2(C,\alpha,d)  \cdot t^{\frac{\alpha-1}{2}}.
\end{equation}
Integration yields
\begin{equation}
\label{pt}
| P_{t} u(x) - u(x)|
\le C_3(C,\alpha,d)  \cdot t^{\frac{\alpha+1}{2}}.
\end{equation}

Note  that
$$
u_{x_i}(x) - P_t u_{x_i}(x)
= - \int_{0}^{t} \bigl( P_s u \bigr)_{s x_i} ds,
$$
$$
P_s u = P_{s/2} u * p_{s/2}.
$$
Differentiating the convolution identity yields
\begin{align*}
P_t u_{x_i}(x) - u_{x_i}(x)
&
=
\int_{0}^{t} \Bigl( \int_{\R^d} \bigl( P_{s/2} u(y)\bigr)_s \bigl(p_{s/2}(x-y))_{x_i} dy \Bigr) ds
\\&
+ \int_{0}^{t}  \Bigl(\int_{\R^d}P_{s/2} u(y) \bigl(p_{s/2}(x-y))_{x_i s} dy \Bigr) ds.
\end{align*}
It is easy to check that
\begin{align*}
& \int_{0}^{t}  \Bigl(\int_{\R^d} P_{s/2}u(y) \bigl(p_{s/2}(x-y))_{x_i s} dy \Bigr) ds
=
\\&
=\int_{0}^{t}  \Bigl(\int_{\R^d} \bigl(P_{s/2}u-u \bigr)(y) \bigl(p_{s/2}(x-y))_{x_i s} dy \Bigr) ds
+  P_{t/2} u_{x_i}(x) - u_{x_i}(x).
\end{align*}
One has for $t=h^2$
\begin{align*}
P_t u_{x_i}(x) - P_{t/2} u_{x_i}(x)
&
=
\int_{0}^{t} \Bigl( \int_{\R^d} \bigl( P_{s/2} u\bigr)_s(y) \bigl(p_{s/2}(x-y))_{x_i} dy \Bigr) ds
\\&
+ \int_{0}^{t}  \Bigl(\int_{\R^d} \bigl(P_{s/2}u-u \bigr)(y) \bigl(p_{s/2}(x-y))_{x_i s} dy \Bigr) ds.
\end{align*}
Then it follows from (\ref{pt}), (\ref{ptt}) that
\begin{align*}
&
|P_t u_{x_i}(x) - P_{t/2} u_{x_i}(x)|
\le
\\&
C_4
\Bigl(
\int_{0}^{t} s^{\frac{\alpha-1}{2}} \int_{\R^d}  \bigl|p_{s/2}(x-y))_{x_i} \bigr| dy  ds
+ \int_{0}^{t} s^{\frac{\alpha+1}{2}}  \int_{\R^d} \bigl|p_{s/2}(x-y))_{x_i s}\bigr| dy  ds
\Bigr).
\end{align*}
Applying the same arguments as above we obtain
$$
|P_t u_{x_i}(x) - P_{t/2} u_{x_i}(x)|
\le
C_5
\int_{0}^{t} s^{\frac{\alpha-2}{2}} ds = C_6 t^{\frac{\alpha}{2}}.
$$
Hence
\begin{align*}
& |P_t u_{x_i}(x) - u_{x_i}(x)|
\\&
\le
\sum_{k=0}^{\infty} |P_{t/2^k} u_{x_i}(x) - P_{t/2^{k+1}} u_{x_i}(x)|
 \le C_6 \sum_{k=0}^{\infty} \Bigl( \frac{t}{2^k} \Bigr) ^{\frac{\alpha}{2}}
=C_7 t^{\alpha/2} = C_7 h^{\alpha}.
\end{align*}

The same estimate holds for the remaining third term.
The proof is complete.
\end{proof}

\begin{remark}
The statement does not hold for $\alpha=0$ (see \cite{Stein} 4.3.1).
\end{remark}

\begin{corollary}
\label{hoelder-cor}
Under assumptions of Theorem \ref{hoelder}
the optimal transportation $\nabla \varphi$
is H{\"o}lder continuous:
$$
|\nabla \varphi(x) -\nabla \varphi(y)| \le C |x-y|^{\alpha}
$$
with $C$ depending only on $p,q, A_p, A_q$.
\end{corollary}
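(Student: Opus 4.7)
The plan is to deduce Corollary \ref{hoelder-cor} by combining Theorem \ref{hoelder} with Proposition \ref{Stein-lem} in a direct manner: Theorem \ref{hoelder} provides exactly the second-order differential quotient bound that Proposition \ref{Stein-lem} needs as input, and Proposition \ref{Stein-lem} upgrades such a bound to Hölder continuity of the first derivatives.

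First, I would observe that since $\varphi$ is convex, the quantity
\[
\delta_2 \varphi(x,y) := \varphi(x+y) + \varphi(x-y) - 2\varphi(x)
\]
is non-negative for every $x,y \in \R^d$. Hence the one-sided bound supplied by Theorem \ref{hoelder}, applied with $y = th$ for $h$ a unit vector and $t = |y|$, upgrades automatically to the two-sided estimate
\[
|\delta_2 \varphi(x,y)| \le 2\Bigl(\frac{A_p}{A_q}\Bigr)^{\frac{1}{q+1}} |y|^{1+\alpha}
\]
with $\alpha = \frac{p+1}{q+1}$. This is precisely the hypothesis of Proposition \ref{Stein-lem} applied to $u = \varphi$.

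Second, I would verify the integrability conditions needed in Proposition \ref{Stein-lem}, namely that $\varphi$ and $|\nabla \varphi|$ are integrable against every Gaussian measure. The second difference bound above forces $\varphi$ to grow at most like $|x|^{1+\alpha}$ at infinity (since bounds on all second differences along lines control the function up to an affine term), and correspondingly $|\nabla \varphi|$ grows at most like $|x|^{\alpha}$. Both bounds are polynomial, so integrability against Gaussians is immediate. If any issue arises from $\varphi$ not being a priori smooth, one can invoke the $C^2$ regularity already used in the proof of Theorem \ref{hoelder} (following \cite{Caf92}), or work on a compactly supported mollification and pass to the limit.

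Finally, applying Proposition \ref{Stein-lem} to $u = \varphi$ yields
\[
|\varphi_v(x+y) - \varphi_v(x)| \le C' |y|^{\alpha}
\]
for every unit vector $v$, with $C'$ depending only on $d$, $\alpha$, and $(A_p/A_q)^{1/(q+1)}$, i.e.\ only on $p,q,A_p,A_q$. Taking $v$ to be the direction of $\nabla\varphi(x+y)-\nabla\varphi(x)$ gives the desired Hölder estimate for $\nabla \varphi$. The main subtlety is the integrability check together with the (mild) issue that Proposition \ref{Stein-lem}'s constant formally depends on $d$; since the dependence enters only through the Gaussian integrals $\int |y|^{\alpha+1}|p_t(y)_{y_iy_j}|\,dy$ on a fixed coordinate pair, one can in fact extract a dimension-free constant, which is what the introduction advertises.
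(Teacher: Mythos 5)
Your overall strategy matches the paper's: apply Theorem \ref{hoelder} to obtain the second differential quotient bound and feed it into Proposition \ref{Stein-lem}. However, there is a genuine gap in the way you dispose of the dimension dependence of the constant, and this gap matters because the corollary explicitly asserts that $C$ depends only on $p,q,A_p,A_q$ and not on $d$.

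Your claim that ``the dependence enters only through the Gaussian integrals $\int |y|^{\alpha+1}|p_t(y)_{y_iy_j}|\,dy$ on a fixed coordinate pair'' is not correct, and the conclusion you draw from it does not follow. Even in that integral, the factor $|y|^{\alpha+1}$ is the full Euclidean norm in $\R^d$, so after the scaling $y = \sqrt{t}z$ the bound involves the moment $\int_{\R^d}|z|^{\alpha+1}\bigl|\tfrac{z_iz_j}{1}-\delta_{ij}\bigr|p_1(z)\,dz$, which grows like $d^{(\alpha+1)/2}$. Worse, the estimate on $|(P_tu)_t|$ involves $\int_{\R^d}|y|^{\alpha+1}\bigl|\tfrac{|y|^2}{2t^2}-\tfrac{d}{2t}\bigr|p_t(y)\,dy$, which after scaling behaves like a $d$-dependent power; the statement of Proposition \ref{Stein-lem} itself acknowledges this by allowing $C'$ to depend on $d$. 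So applying Proposition \ref{Stein-lem} to $\varphi$ directly on $\R^d$ yields only a dimension-dependent constant, and your suggested repair does not remove that dependence.

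The paper circumvents this with a different device: fix $x$, a unit vector $h$, and a unit vector $v$, and restrict $\varphi$ to the $2$-dimensional affine plane $A_{x,v,h}=\{x+th+sv : s,t\in\R\}$. The restricted function still satisfies the same second-difference bound (with the same constant, since that bound is isotropic), so the \emph{two-dimensional} version of Proposition \ref{Stein-lem} gives $|\langle\nabla\varphi(x+th)-\nabla\varphi(x),v\rangle|\le C'|t|^{\alpha}$ with $C'$ a fixed two-dimensional constant. Taking the supremum over unit $v$ then yields the Hölder bound for $|\nabla\varphi(x+th)-\nabla\varphi(x)|$ with a constant independent of $d$. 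You should incorporate this plane-restriction step (or an equivalent dimension-reduction argument); without it, the proof establishes only a weaker, $d$-dependent version of the corollary. Your remarks on non-negativity of $\delta_2\varphi$ and on polynomial growth giving Gaussian integrability are fine; the paper instead reduces to compactly supported $\nu$ so that $|\nabla\varphi|$ is bounded, which is a cleaner route to the same integrability.
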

\begin{proof}
Replace $\nu$ by $\frac{1}{\nu(A)} \nu|_{A}$ with a bounded convex $A$. Clearly, the new potential  satisfies
 assumptions of the theorem.
Hence, using standard approximation arguments we can restrict ourselves
to the case of compactly supported $\nu$. Thus we assume without loss of generality that
$|\nabla \varphi| \le K$.
By Theorem \ref{hoelder}
\begin{equation}
\label{24.10.08}
\varphi(x+th) + \varphi(x-th)
-2\varphi(x) \le C|t|^{\alpha}
\end{equation}
for every unit vector  $h$.
We get immediately from Proposition \ref{Stein-lem} that $\nabla \varphi$ is H{\"o}lder.
To see that the constant does not depend on dimension, let as fix $x,h  \in \R^d$ and $v \in \R^d$
with $|h|=|v| = 1$. Consider the restriction of $\varphi$ onto
the affine hyperplane $A_{x,v,h} = \{x + th +sv, \ s \in \R, t \in \R \}$.
This restriction clearly satisfies (\ref{24.10.08}).
Thus the $2$-dimensional version of Lemma \ref{Stein-lem} implies that
$$
|\langle \nabla \varphi(x+th)-\nabla \varphi(x), v \rangle|
= |\varphi_v(x+th) - \varphi_v(x)| \le C'|t|^{\alpha},$$
where $C'$ (a corresponding "two-dimensional" constant) does not depend on $d$ and on directions of $h,v$.
Since this can be repeated for every $x,h,v$ with  the same constant $C'$, one has
$$
|\nabla \varphi(x+th)-\nabla \varphi(x) |
=
\sup_{|v|=1}
|\langle \nabla \varphi(x+th)-\nabla \varphi(x), v \rangle|
\le
C'|t|^{\alpha}.
$$
The proof is complete.
\end{proof}

\begin{remark}
A more general statement see in Corolary \ref{MS-conc}.
\end{remark}

\section{Estimates for the second order derivatives: one-dimensional case}

We start with some  heuristic estimates in the one-dimensional case.
Consider a convex function $\varphi$ such that $\varphi'$ sends
$\mu = \frac{1}{\sqrt{2\pi}} e^{-\frac{x^2}{2}} \ dx$ to a probability measure $e^{-W} dx$.
We assume that $W$ is sufficiently smooth.

By the change of variables formula
\begin{equation}
\label{cvf}
-\frac{\log 2 \pi}{2} - \frac{x^2}{2} = -W(\varphi') + \log \varphi''.
\end{equation}
Assume that $f$ is a smooth function such that
$
\frac{\varphi''}{f(\varphi')}
$
admits its maximum at some point $x_0$.
One has at this point
\begin{equation}
\label{1'}
\frac{\varphi^{(3)}}{f(\varphi')} - \frac{(\varphi'')^2 f'(\varphi') }{f^2(\varphi')} = 0,
\ \ \
\frac{\varphi^{(4)}}{f(\varphi')} - \frac{3 \ \varphi^{(3)} \varphi'' f'(\varphi') }{f^2(\varphi')}
- (\varphi'')^3 \Bigl[ \frac{f'}{f^2}\Bigr]' (\varphi')
 \le 0.
\end{equation}
Thus we get
\begin{equation}
\label{3'}
\varphi^{(3)}= \frac{(\varphi'')^2 f'(\varphi') }{f(\varphi')},
\end{equation}
Applying (\ref{3'}) we find
\begin{equation}
\label{4'}
\varphi^{(4)} \le (\varphi^{''})^3 \Bigl[ \frac{f''}{f} + \Bigl( \frac{f'}{f} \Bigr)^2\Bigr](\varphi').
\end{equation}
Differentiating (\ref{cvf}) twice yields
$$
0 = 1 - W''(\varphi') (\varphi^{''})^2 - W'(\varphi') \varphi^{(3)}
+ \frac{\varphi^{(4)}}{\varphi''} - \Bigl( \frac{\varphi^{(3)}}{\varphi''}\Bigr)^2.
$$
Applying (\ref{3'}) and (\ref{4'}) we get
$$
 -(\varphi^{''})^2 \frac{f''(\varphi')}{f(\varphi')} + W''(\varphi') (\varphi^{''})^2 + W'(\varphi') \frac{(\varphi'')^2 f'(\varphi') }{f(\varphi')}
  \le 1.
$$
Now assume that $f$ satisfies
$
 - \frac{f''}{f} + W''+ W' \frac{ f'}{f} \ge \frac{1}{f^2}.
$
Thus we get  at  $x_0$
$$
\Bigl[ \frac{\varphi''}{f(\varphi')} \Bigr]^2 \le 1.
$$

Since $x_0$ is supposed to be the point of maximum
for $\frac{\varphi''}{f(\varphi')}$, we get a formal proof
of the following statement:

{\it Assume that $f$ and $W$ satisfy
$$
 - \frac{f''}{f} + W''+ W' \frac{ f'}{f} \ge \frac{1}{f^2}.
$$
Then
$$
\varphi'' \le f(\varphi').
$$
}

Of course, these arguments are non-rigorous. Nevertheless,
they are applicable for many reasonable situations.
\begin{example}
For the case $W'' \ge K$ and $f \equiv \frac{1}{\sqrt{K}}$ we get a refinement of the  Caffarelli's result:
$$
\varphi'' \le \frac{1}{\sqrt{K}}.
$$
\end{example}

\begin{example}
\label{1dim}
Assume that
$$
e^{-W} = \frac{1}{2a} I_{[-a,a]}.
$$
Applying the arguments from above, we get the following estimate:
$$
\varphi'' \le f(\varphi'),
$$
where $f:[-a,a] \to \R^+$ is a concave even function, decreasing of $[0,a]$,
such that $f(a)=f(-a)=0$ and
$$
-ff^{''}=1.
$$
This function can be found explicitly:
$$
f(t) =f(0) \cdot \Phi^{-1} \Bigl( \frac{\sqrt{\frac{\pi}{2}}f(0) -t}{f(0)} \Bigr).
$$
Here
$
\Phi: [0,1] \to \Bigl[0, \sqrt{\frac{\pi}{2}}\Bigr]
$
is defined by
$$
\Phi(x) = \int_{0}^{x} \frac{ds}{\sqrt{-2 \ln s}}
= \int_{\sqrt{-2 \ln x}}^{\infty} e^{-\frac{s^2}{2}} ds.
$$
In addition, $a$ and $f(0)$ are related by
$$
f(0) = a \sqrt{\frac{2}{\pi}}.
$$
Thus
$$
\varphi'' \le a \sqrt{\frac{2}{\pi}} \Phi^{-1} \Bigl( \sqrt{\frac{\pi}{2}} \cdot \frac{a -\varphi'}{a} \Bigr).
$$

Note that this example can be obtained directly from the Caffarelli theorem. Indeed, set:
$$
G(x) = \int_{0}^{x} \frac{ds}{f(s)} = -f'(x).
$$
It can be verified by the direct computation that
$G$ pushes forward $\frac{1}{2a} I_{[-a,a]}$ to $\nu = \frac{1}{\gamma([-b,b])} \cdot \gamma|_{[-b,b]}$,
where $b = G(a)$. Note that $T = G(\varphi')$ is the optimal transportation
of $\gamma$ to $\nu$. Hence, by the Caffarelli theorem, $T$ is $1$-Lipschitz:
 $T' \le 1$. Thus $\frac{\varphi''}{f(\varphi')} \le 1$.
\end{example}

\section{Estimates for the second order derivatives: multidimensional case}

For the rest of the paper $\nu$ is the uniform measure
on a bounded convex domain  $K$. The potential $V$ is supposed to be at least two times
 differentiable with a H{\"o}lder second derivative.

Note that  the assumption of convexity of $K$ is crucial for smoothness of $\varphi$.
It is well known that for non-convex sets the potential $\varphi$ is not smooth in general.
Nevertheless, in our case the Caffarelli's regularity theory ensures that
$\varphi$ is smooth (see \cite{Caf92}, \cite{Caf96}).
Indeed, let us take any convex domain $K' \subset K$ such that
$\partial K'$ lies positive distance from $K$. Let $K_0 = T^{-1} K'.$
It is easy to check that $K_0$ is bounded (otherwise we get the contradiction with the monotonicity of $T$).
Then applying a result from \cite{Caf92} we immediately get that $T$ is differentiable with a
H{\"o}lder continuous derivative
inside of $K_0$. The further regularity follows from the smoothness of $V$ by the classical arguments
(see \cite{Caf92}, Remark 4.15 in \cite{Vill}).

\begin{definition}
For a positive $a$ and  $p \in \R$ let
$f(t):=f_{p,a}(t) : [-a,a] \to \R^{+}$ be a function satisfying:
\begin{itemize}
\item[1)]
$
ff'' + p(f')^2=-1
$
\item[2)]
$f$ is even and $f(a)=0$.
\end{itemize}
\end{definition}

 Assumptions 1)-2)  define uniquely a  function $f$, which
decreases on $[0,a]$.
Assume for a while that $p>0$.
First, taking $f$ as a new variable and integrating 1), one gets
$$
f' = - \frac{1}{\sqrt{p}} \sqrt{\Bigl( \frac{f(0)}{f}\Bigr)^{2p}-1}.
$$

This yields the following expression for $f$:
$$
f(t) = f(0) \sqrt[p]{\Psi^{-1} \Bigl( \sqrt{p} \ \frac{t}{f(0)} \Bigr)},
$$
where
$
\Psi : [0,1] \to [0,\int_{0}^{\frac{\pi}{2}} \sin^{\frac{1}{p}}x \ dx],
$
$$
\Psi(t) = \int_{t}^1 \frac{r^{\frac{1}{p}} dr}{\sqrt{1-r^2}}.
$$
In addition, $a$ and $f(0)$ are related by
$$
a \sqrt{p} = f(0) \int_{0}^1 \frac{r^{\frac{1}{p}} dr}{\sqrt{1-r^2}}
=
f(0) \int_{0}^{\frac{\pi}{2}} \sin^{\frac{1}{p}}x \ dx
.
$$

The case $p=0$ has been considered in Example \ref{1dim}.
The case $p<0$ is similar to $p>0$. One has
$$
f(t) = f(0) \sqrt[p]{\Psi^{-1} \Bigl( \sqrt{-p} \ \frac{t}{f(0)} \Bigr)},
$$
where
$
\Psi : [1,\infty] \to [0,\int_{0}^{\frac{\pi}{2}} \sin^{-1-\frac{1}{p}}x \ dx],
$
$$
\Psi(t) = \int_{1}^t \frac{r^{\frac{1}{p}} dr}{\sqrt{r^2-1}}.
$$
In addition, $a$ and $f(0)$ are related by
$$
a \sqrt{-p} = f(0) \int_{1}^{\infty} \frac{r^{\frac{1}{p}} dr}{\sqrt{r^2-1}}
=
f(0) \int_{0}^{\frac{\pi}{2}} \sin^{-1-\frac{1}{p}}x \ dx
.
$$

\begin{theorem}
Let $K \subset \R^d$, $d>1$ be a bounded convex set of a positive volume.
Let $\nabla \varphi : \R^d \to K$ be the optimal transportation pushing forward
probability measure $\mu = e^{-V} dx$ with smooth $V$
to the uniform measure $\frac{1}{\lambda(K)} \lambda|_K$. Assume that $V_{hh} \le \Lambda$ for some $h \in \R^d$
with $|h|=1$. Then one has
$$
\varphi_{hh} \le \sqrt{\Lambda} \cdot f_{\frac{d-1}{4}, a} (\varphi_h - t_0),
$$
where $t_0$ is chosen in such a way that
$$L_1 = \{x: \langle x,h \rangle = t_0-a\}, \ L_2 = \{x: \langle x,h \rangle = t_0+a\}$$
are supporting hyperplanes to $K$.

If, in addition, we assume that
$|V_h| \le M$ then the following dimension-free estimate holds:
$$
\varphi_{hh} \le \sqrt{\Lambda + \frac{M^2}{4(1+p)}} \cdot f_{p, a} (\varphi_h - t_0)
$$
for any $p>-1$.

\end{theorem}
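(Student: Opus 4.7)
The plan is to adapt the one-dimensional maximum principle argument of Section 3 to the multidimensional setting, following Caffarelli's strategy. Set $f = f_{p,a}$ and consider
$$
G(x) = \log \varphi_{hh}(x) - \log f\bigl(\varphi_h(x) - t_0\bigr),
$$
which is well-defined since $\nabla\varphi$ takes values in $K$ and therefore $\varphi_h(x) - t_0 \in [-a,a]$. The regularity theory recalled before the statement provides the $C^{2,\alpha}$ smoothness needed to differentiate twice. A standard approximation (replace $K$ by a slightly smaller convex $K' \subset K$ with $\mathrm{dist}(\partial K', \partial K) > 0$, apply the argument to the transport to $K'$, and pass to the limit) allows us to assume that $G$ attains its supremum at some interior point $x_0$, where $f(\varphi_h(x_0)-t_0) > 0$.

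At $x_0$, the first-order condition $\nabla G = 0$ reads
$$
\varphi_{hhi} = \frac{f'}{f}\,\varphi_{hh}\,\varphi_{hi}, \qquad i = 1,\ldots,d,
$$
and $D^2 G(x_0) \le 0$ contracted against the positive matrix $(\varphi^{ij}) := (D^2\varphi)^{-1}$ gives $\sum_{ij}\varphi^{ij}G_{ij}(x_0) \le 0$. Differentiating the Monge--Amp\`ere equation $\log \det D^2\varphi = -V + \mathrm{const}$ once and twice in direction $h$ produces
$$
\sum_{ij}\varphi^{ij}\varphi_{hij} = -V_h, \qquad \sum_{ij}\varphi^{ij}\varphi_{hhij} = -V_{hh} + T,
$$
with $T := \sum_{ijkl}\varphi^{ik}\varphi^{jl}\varphi_{hij}\varphi_{hkl}$. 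Substituting these, together with the critical-point relation and the identity $\sum\varphi^{ij}\varphi_{hi}\varphi_{hj} = \varphi_{hh}$ (which turns $\sum\varphi^{ij}\varphi_{hhi}\varphi_{hhj}$ into $(f'/f)^2\varphi_{hh}^3$), the $(f'/f)^2$ terms cancel and I expect to arrive at
$$
T - V_{hh} - \frac{f''}{f}\varphi_{hh}^2 + \frac{f'}{f}V_h\,\varphi_{hh} \le 0.
$$

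The crucial step is a sharp lower bound on $T$. Setting $\Phi = D^2\varphi$ and $A = \Phi^{-1/2}(\varphi_{hij})\Phi^{-1/2}$, a symmetric matrix, one has $T = \mathrm{tr}(A^2)$ and $\mathrm{tr}(A) = -V_h$. The critical-point condition forces $v := \Phi^{1/2} e_h$ to be an eigenvector of $A$: indeed $Av = \Phi^{-1/2}(\varphi_{hij})e_h = \Phi^{-1/2}(\varphi_{hhi})_i = (f'/f)\varphi_{hh}\,\Phi^{1/2}e_h$, with eigenvalue $\mu_1 = (f'/f)\varphi_{hh}$. Cauchy--Schwarz applied to the remaining $d-1$ eigenvalues, which sum to $-V_h - \mu_1$, yields $T \ge \mu_1^2 + (V_h+\mu_1)^2/(d-1)$. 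Using the defining ODE of $f_{p,a}$ in the form $-f''/f = (1+p(f')^2)/f^2$, the resulting inequality becomes a quadratic in $\mu_1$ and $V_h$; a direct expansion reveals the discriminant to be the perfect square $(d-1)^2V_h^2$, which is precisely what singles out $p = (d-1)/4$.

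For the first claim, one maximizes the $V_h$-dependent portion of the inequality, $-(V_h+\mu_1)^2/(d-1) - V_h\mu_1$, over all $V_h$; the maximum equals $\mu_1^2(d-1)/4$, attained at $V_h = -(d+1)\mu_1/2$. With $p = (d-1)/4$ the coefficient $1 + (p-(d-1)/4)(f')^2$ collapses to $1$, leaving $\varphi_{hh}^2 \le \Lambda f^2$, which is the desired bound. For the dimension-free claim, I would instead keep only the cruder bound $T \ge \mu_1^2$ (the contribution of the single eigenvalue) and handle the cross term by AM--GM, $|V_h\mu_1| \le \alpha\mu_1^2 + M^2/(4\alpha)$. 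Choosing $\alpha = 1+p > 0$ (hence the restriction $p > -1$) cancels the $(f')^2$ coefficient exactly and produces $\varphi_{hh}^2 \le (\Lambda + M^2/(4(1+p)))f^2$, as claimed. The main obstacle is not any of the inequalities but the existence of a maximum of $G$ on $\R^d$: since $f(\varphi_h - t_0) \to 0$ on $\{\varphi_h = t_0 \pm a\}$, $G$ may a priori tend to $+\infty$ near this set, so the approximation by interior convex subdomains is essential to rigorously justify the maximum principle step.
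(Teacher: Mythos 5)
Your argument follows the same maximum-principle strategy as the paper, but reorganizes it in a cleaner, invariant form. Where the paper passes to coordinates in which $D^2\varphi$ is diagonal at the critical point and then manipulates explicit partial sums, you work with the inverse matrix $(\varphi^{ij})$ and the conjugated matrix $A=\Phi^{-1/2}(\varphi_{hij})\Phi^{-1/2}$, using $\mathrm{tr}(A)=-V_h$, $\mathrm{tr}(A^2)=T$, and the observation that the critical-point identity $\varphi_{hhi}=(f'/f)\varphi_{hh}\varphi_{hi}$ makes $\Phi^{1/2}h$ an eigenvector of $A$ with eigenvalue $\mu_1=(f'/f)\varphi_{hh}$. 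Cauchy--Schwarz on the remaining $d-1$ eigenvalues then gives precisely the $-\frac{d-1}{4}(\psi')^2\varphi_{hh}^2$ term that the paper extracts from its sum over $j\ge 2$. Both routes lead to the same differential inequality, and your derivation of the two constants (via $\inf_{V_h}$ and via AM--GM with $\alpha=1+p$) agrees with the paper's. So the mathematical heart is correct and, I think, more transparent than the coordinate-change version.

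There are two points that need tightening. First, a small arithmetic slip: the maximum of $-(V_h+\mu_1)^2/(d-1)-V_h\mu_1$ over $V_h$ is $\frac{d+3}{4}\mu_1^2$, not $\frac{d-1}{4}\mu_1^2$; the $\frac{d-1}{4}\mu_1^2$ coefficient only emerges after you absorb the leading $\mu_1^2$ from $T\ge\mu_1^2+\cdots$ into the same quadratic, i.e.\ $\inf_{V_h}\bigl[\mu_1^2+\frac{(V_h+\mu_1)^2}{d-1}+V_h\mu_1\bigr]=-\frac{d-1}{4}\mu_1^2$. Your final coefficient $1+(p-\frac{d-1}{4})(f')^2$ is right, so this looks like a description slip rather than a real error, but as written the bookkeeping is inconsistent. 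Second, and more substantively, the shrinking-domain approximation $K'\subset K$ does not obviously force $G$ to attain its maximum: for the transport to $K'$ the quantity $\log\varphi'_{hh}-\log f_{p,a'}(\varphi'_h-t'_0)$ still has both terms tending to $-\infty$ along directions where $\varphi'_h\to t'_0\pm a'$, and the sign of $G'$ at infinity depends on relative rates which you do not control. The paper sidesteps this by inserting the compactly supported cutoff $(1-\varepsilon\varphi)_+$, which forces the supremum into a fixed compact set, and then deriving an $\varepsilon$-perturbed ODE for $f_\varepsilon$ whose solutions converge to $f_{\frac{d-1}{4},a}$; alternatively one could replace $f_{p,a}$ by $f_{p,a'}$ with $a'>a$ (so $f_{p,a'}$ is bounded below on $[-a,a]$, while $\varphi_{hh}\to 0$ at infinity) and let $a'\downarrow a$. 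As stated, your compactness step is a genuine gap, though the repair is routine and does not affect the core computation.
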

\begin{proof}
Note that the estimates are invariant with respect to any shift of the space.
Let us shift $K$ is such a way that $K$ contains the origin.
This clearly implies that $\lim_{x \to \infty} \varphi(x)=+\infty$.
We are looking for a maximum of
$
\varphi_{hh}(x) e^{\psi(\varphi_h(x))}
$
among  all of $x \in \R^d$, with $\psi$ to be chosen later.
To apply the maximum principle and make sure that the maximum is attained
we deal with the following
compactly supported modification:
$$
F_{\varepsilon}(x) =
(1-\varepsilon \varphi)_{+} \varphi_{hh}(x) e^{\psi(\varphi_h(x))}.
$$

Consider the change of variables formula for $\varphi$
$$
C(K) + \log \det D^2 \varphi = -V.
$$
Let $y$ be a desired maximum point of $F_{\varepsilon}$. All the computations below are made at this point.
First we change the coordinate system linearly in such a way that
$$
1) h=e_1, ~ \mbox{2) $D^2 \varphi(y)$ is diagonal}.
$$
The first  requirement is achieved just by a  rotation. In addition, without loss of generality we may assume that
$(D^2\varphi)_{i,j}$, $i,j \ge 2$ is diagonal.
 To fulfill the second one we choose
a non-orthogonal linear transformation.
$$
x'_1 = x_1 + \frac{\varphi_{x_2 x_1}(y)}{\varphi_{x_1 x_1}(y)} x_2 + \cdots
+ \frac{\varphi_{x_d x_1}(y)}{\varphi_{x_1 x_1}(y)} x_d,
$$
$$
x'_{i} = x_{i}, ~ i \in \{2,\cdots,d\}.
$$
Here $(x'_1, \cdots, x'_d)$ is the "new" coordinate system and $(x_1, \cdots, x_d)$ is the "old" one.
One checks easily  that $\varphi_{x'_1 x'_i}=0$ for $i \ge 2$.
Since the transformation is represented by a unimodal matrix, the change of variables formula
looks in new coordinates as follows:
\begin{equation}
\label{MAE+}
C(K) + \log \det D^{2}_{x'} \varphi = - V\Bigl(x'_1 - \frac{\varphi_{x_2 x_1}(y)}{\varphi_{x_1 x_1}(y)} x'_2 - \cdots
- \frac{\varphi_{x_d x_1}(y)}{\varphi_{x_1 x_1}(y)} x'_d, x'_2, \cdots, x'_d\Bigr),
\end{equation}
where  $D^{2}_{x'} \varphi$ denotes the matrix of second derivatives computed in new coordinate system.
In what follows we set:
$x'_1 - \frac{\varphi_{x_2 x_1}(y)}{\varphi_{x_1 x_1}(y)} x'_2 - \cdots
- \frac{\varphi_{x_d x_1}(y)}{\varphi_{x_1 x_1}(y)} x'_d = Ox'$
and write
for simplicity  $x_i$ instead of $x'_i$.

Let us differentiate ate (\ref{MAE+}). To this end it is convenient to use the
following expression for the determinant
$$
\det D^2\varphi = \sum_{\sigma \in S(d)} \mbox{sign} (\sigma)
\varphi_{x_1 x_{\sigma(1)}} \cdots \varphi_{x_d x_{\sigma(d)}}
$$
 and taking into account that $\varphi_{x_i x_j}(y) =0$ if $i \ne
j$, we get at $y$:

\begin{equation}
\label{1det}
-V_{x_1}(Ox) =\frac{(\det D^2 \varphi)_{x_1}}{\det D^2 \varphi} = \sum_{i}
\frac{\varphi_{x_i x_i x_1}}{\varphi_{x_i x_i}} ,
\end{equation}

\begin{align}
\label{2det}
-V_{x_1 x_1}(Ox)  & =
\frac{(\det D^2 \varphi)_{x_1 x_1}}{\det D^2 \varphi}
- \Bigl( \frac{(\det D^2 \varphi)_{x_1}}{\det D^2 \varphi}\Bigr)^2 =
\\& \nonumber = \sum_{i}
\frac{\varphi_{x_i x_i x_1 x_1}}{\varphi_{x_i x_i}} + \sum_{i \ne j}
\frac{\varphi_{x_i x_i x_1}}{\varphi_{x_i x_i}} \frac{\varphi_{x_j x_j
x_1}}{\varphi_{x_j x_j}}
- \sum_{i \ne j} \frac{\varphi^2_{x_i x_j x_1}}{\varphi_{x_i x_i} \varphi_{x_j x_j}}.
\end{align}

Differentiating (\ref{MAE+}) twice in $x_1$ yields
$$
-\Lambda \le -V_{x_1x_1}(Ox) = \Bigl[\log \det D^2 \varphi \Bigr]_{x_1 x_1}
=
\frac{(\det D^2 \varphi)_{x_1 x_1}}{\det D^2 \varphi}
-
\Bigl(\frac{(\det D^2 \varphi)_{x_1}}{\det D^2 \varphi}\Bigr)^2.
$$

It follows from (\ref{1det}), (\ref{2det}) that
\begin{equation}
\label{9.10}
-\Lambda \le \sum_{i}
\frac{\varphi_{x_i x_i x_1 x_1}}{\varphi_{x_i x_i}}
- \sum_{i, j} \frac{\varphi^2_{x_i x_j x_1}}{\varphi_{x_i x_i} \varphi_{x_j x_j}}.
\end{equation}

Since $\log F_{\varepsilon}$ admits its maximum at $y$, one has
at this point
\begin{equation}
\label{x1}
-\frac{\varepsilon}{1-\varepsilon \varphi} \varphi_{x_1}
+
\frac{\varphi_{x_1 x_1 x_1}}{\varphi_{x_1 x_1}}
+ \psi'(\varphi_{x_1}) \varphi_{x_1 x_1} =0,
\end{equation}
\begin{equation}
\label{xi}
-\frac{\varepsilon}{1-\varepsilon \varphi} \varphi_{x_i}
+
\frac{\varphi_{x_1 x_1 x_i}}{\varphi_{x_1 x_1}} =0, ~ i \ge 1.
\end{equation}
Differentiating $\log F_{\varepsilon}$ twice in $x_1$ yields
\begin{align*}
-\frac{\varepsilon}{1-\varepsilon \varphi} \varphi_{x_i x_i}
&
- \frac{\varepsilon^2}{\bigl(1-\varepsilon \varphi\bigr)^2} \varphi^2_{x_i}
+ \frac{\varphi_{x_1 x_1 x_i x_i}}{\varphi_{x_1 x_1}}
- \Bigl( \frac{\varphi_{x_1 x_1 x_i}}{\varphi_{x_1 x_1}} \Bigr)^2
\\&
+ \psi^{''}(\varphi_{x_1}) \varphi^2_{x_1 x_i}
+\psi^{'}(\varphi_{x_1}) \varphi_{x_1 x_i x_i} \le 0.
\end{align*}
Multiplying this inequality on $\frac{\varphi_{x_1 x_1}}{\varphi_{x_i x_i}}$,
 summing in $i$ and applying (\ref{9.10}) one gets
\begin{align}
\label{02.10.08}
&
-\frac{d\varepsilon}{1-\varepsilon \varphi} \varphi_{x_1 x_1}
- \frac{\varepsilon^2}{\bigl(1-\varepsilon \varphi\bigr)^2} \varphi_{x_1 x_1}
\Bigl(\frac{\varphi^2_{x_1}}{\varphi_{x_1 x_1}} + \cdots +\frac{\varphi^2_{x_d}}{\varphi_{x_d x_d}} \Bigr)
\\&
\nonumber
+ \sum_{i, j} \frac{\varphi^2_{x_i x_j x_1}}{\varphi_{x_i x_i} \varphi_{x_j x_j}}
- \sum_{i} \frac{\varphi^2_{x_1 x_1 x_i}}{\varphi_{x_1 x_1} \varphi_{x_i x_i}}
\\&
\nonumber
+\psi^{''}(\varphi_{x_1}) \varphi^2_{x_1 x_1}
+
\psi^{'}(\varphi_{x_1})
\Bigl[ \sum_{i} \frac{\varphi_{x_1 x_i x_i}}{\varphi_{x_i x_i}} \Bigr] \varphi_{x_1 x_1}
\le \Lambda.
\end{align}
Note that
\begin{align*}
&
\sum_{i, j} \frac{\varphi^2_{x_i x_j x_1}}{\varphi_{x_i x_i} \varphi_{x_j x_j}}
- \sum_{i} \frac{\varphi^2_{x_1 x_1 x_i}}{\varphi_{x_1 x_1} \varphi_{x_i x_i}}
= \sum_{i \in \{2,\cdots,d\}}
\sum_{j \in \{1,\cdots,d\}} \frac{\varphi^2_{x_i x_j x_1}}{\varphi_{x_i x_i} \varphi_{x_j x_j}}
\\&
\ge
\sum_{j \in \{2,\cdots,d\}}  \frac{\varphi^2_{x_j x_j x_1}}{\varphi^2_{x_j x_j}}
+
\sum_{i \in \{2,\cdots,d\}}  \frac{\varphi^2_{x_i x_1 x_1}}{\varphi_{x_i x_i} \varphi_{x_1 x_1}}.
\end{align*}
One gets by (\ref{xi})
$$
\sum_{i, j} \frac{\varphi^2_{x_i x_j x_1}}{\varphi_{x_i x_i} \varphi_{x_j x_j}}
- \sum_{i} \frac{\varphi^2_{x_1 x_1 x_i}}{\varphi_{x_1 x_1} \varphi_{x_i x_i}}
\ge
\sum_{j \in \{2,\cdots,d\}}  \frac{\varphi^2_{x_j x_j x_1}}{\varphi^2_{x_j x_j}}
+
\sum_{i \in \{2,\cdots,d\}}  \frac{\varepsilon^2}{(1-\varepsilon \varphi)^2}
\frac{\varphi_{x_1 x_1}}{\varphi_{x_i x_i}} \varphi^2_{x_i}.
$$
Hence it follows from (\ref{02.10.08})
\begin{align*}
&
-\frac{d\varepsilon}{1-\varepsilon \varphi} \varphi_{x_1 x_1}
- \frac{\varepsilon^2}{\bigl(1-\varepsilon \varphi\bigr)^2}
\varphi^2_{x_1}
+
\sum_{j \in \{2,\cdots,d\}}  \frac{\varphi^2_{x_j x_j x_1}}{\varphi^2_{x_j x_j}}
\\&
+\psi^{''}(\varphi_{x_1}) \varphi^2_{x_1 x_1}
+
\psi^{'}(\varphi_{x_1})
\Bigl[ \sum_{i \in \{2,\cdots d\}} \frac{\varphi_{x_1 x_i x_i}}{\varphi_{x_i x_i}} \Bigr] \varphi_{x_1 x_1}
+
\psi^{'}(\varphi_{x_1}) \varphi_{x_1 x_1 x_1}
\le \Lambda.
\end{align*}
One obtains from (\ref{x1})
$$
\varphi_{x_1 x_1 x_1}
 =\frac{\varepsilon}{1-\varepsilon \varphi} \varphi_{x_1} \varphi_{x_1 x_1}
 - \psi'(\varphi_{x_1}) \varphi^2_{x_1 x_1}.
$$
By the Cauchy inequality
$$
\sum_{j \in \{2,\cdots,d\}}  \frac{\varphi^2_{x_j x_j x_1}}{\varphi^2_{x_j x_j}}
+
\psi^{'}(\varphi_{x_1})
\Bigl[ \sum_{i \in \{2,\cdots d\}} \frac{\varphi_{x_1 x_i x_i}}{\varphi_{x_i x_i}} \Bigr] \varphi_{x_1 x_1}
\ge
-\frac{(d-1)}{4}
(\psi^{'}(\varphi_{x_1}))^2 \varphi^2_{x_1 x_1}.
$$
Hence
\begin{align*}
&
-\frac{d\varepsilon}{1-\varepsilon \varphi} \varphi_{x_1 x_1}
- \frac{\varepsilon^2}{\bigl(1-\varepsilon \varphi\bigr)^2}
\varphi^2_{x_1}
+\frac{\varepsilon}{1-\varepsilon \varphi} \psi^{'}(\varphi_{x_1}) \varphi_{x_1} \varphi_{x_1 x_1}
\\&
+\bigl[ \psi^{''}(\varphi_{x_1}) - \frac{d+3}{4}(\psi^{'}(\varphi_{x_1}))^2 \bigr]\varphi^2_{x_1 x_1}
\le \Lambda.
\end{align*}
Applying the Cauchy inequality one gets
$$
\Bigl[\psi^{''} - \Bigl(\frac{d+3}{4}+\varepsilon\Bigr) (\psi')^2 - d^2\varepsilon^2 \Bigr]\circ \varphi_{x_1}
\cdot \varphi^2_{x_1 x_1} (1-\varepsilon \varphi)^2
\le
\Lambda (1-\varepsilon \varphi)^2
+ (2\varepsilon^2 \varphi^2_{x_1} + \varepsilon).
$$
Let $\psi_{\varepsilon}$ be a smooth function on
$[\inf_{y \in K} \langle x_1, y \rangle, \sup_{y \in K} \langle x_1, y \rangle  ]$
satisfying
\begin{equation}
\label{28.10.08}
\psi^{''}_{\varepsilon} \ge  \bigl(\frac{d+3}{4}+\varepsilon\bigr) (\psi'_{\varepsilon})^2 + d^2\varepsilon^2 +
e^{2\psi_{\varepsilon}}.
\end{equation}
The maximum principle implies
$$
e^{\psi_{\varepsilon}(\varphi_{x_1})} \varphi_{x_1 x_1} (1-\varepsilon \varphi)_{+}
\le
\max_{x \in \R^d} \bigl( e^{\psi_{\varepsilon}(\varphi_{x_1})} \varphi_{x_1 x_1} (1-\varepsilon \varphi)_{+} \bigr)
\le \sqrt{ \Lambda  + \varepsilon +  2\varepsilon^2 \sup_{y \in K} \langle y, x_1 \rangle^2}.
$$
Set: $f_{\varepsilon} := e^{-\psi_{\varepsilon}}$. The differential inequality for $\psi_{\varepsilon}$
can be rewritten in the following way:
\begin{equation}
\label{varep-di}
-f_{\varepsilon} f^{''}_{\varepsilon}
\ge
\Bigl(\frac{d-1}{4} + \varepsilon\Bigr) (f'_{\varepsilon})^2 + 1 +
 \varepsilon^2 d^2 f^2_{\varepsilon}.
\end{equation}

The following easy-to-check observation completes the proof:
there exists a sequence of nonnegative functions $\{f_{\varepsilon}\}$ on
$[-a,a]$ satisfying
(\ref{varep-di}) such that $f_{\varepsilon} \to f_{\frac{d-1}{4}, a}$ uniformly on
$[-a,a ]$.
In the limit $\varepsilon \to 0$ we get from (\ref{28.10.08})
$$\varphi_{x_1 x_1} \le \sqrt{\Lambda} f_{\frac{d-1}{4}, a}(\varphi_{x_1}).$$
The proof of the first statement is complete.

To prove the second statement we use (\ref{1det}) and (\ref{02.10.08}) to get
\begin{align*}
&
-\frac{d\varepsilon}{1-\varepsilon \varphi} \varphi_{x_1 x_1}
- \frac{\varepsilon^2}{\bigl(1-\varepsilon \varphi\bigr)^2} \varphi_{x_1 x_1}
\Bigl(\frac{\varphi^2_{x_1}}{\varphi_{x_1 x_1}} + \cdots +\frac{\varphi^2_{x_d}}{\varphi_{x_d x_d}} \Bigr)
+ \sum_{i, j} \frac{\varphi^2_{x_i x_j x_1}}{\varphi_{x_i x_i} \varphi_{x_j x_j}}
\\&
- \sum_{i} \frac{\varphi^2_{x_1 x_1 x_i}}{\varphi_{x_1 x_1} \varphi_{x_i x_i}}
+\psi^{''}(\varphi_{x_1}) \varphi^2_{x_1 x_1}
+
\psi^{'}(\varphi_{x_1})
V_{x_1}(Ox) \varphi_{x_1 x_1}
\le \Lambda.
\end{align*}
Since $|V_{x_i}|\le M$, arguing as above and passing to the limit $\varepsilon \to 0$,
one gets the desired inequality for $\psi$:
$$
\psi^{''}(\varphi_{x_1}) \varphi^2_{x_1 x_1}
-M |\psi^{'}(\varphi_{x_1})| \varphi_{x_1 x_1}
\le \Lambda.
$$
Setting $f = -\log \psi$ and applying  Cauchy inequality we obtain that for $f$
satisfying
$$
-ff^{''} + \Bigl(1-\frac{\delta}{2} M^2 \Bigr) (f')^2 \ge 1
$$
one has $\varphi_{x_1 x_1} \le f(\varphi_{x_1}) \sqrt{\Lambda + \frac{1}{2\delta}}$.
Let us set: $-p=1-\frac{\delta}{2} M^2$.
The proof is complete.
\end{proof}

\begin{corollary}
\label{measure-set}
\begin{itemize}
\item[1)]
Let $\mu = \gamma$ be the standard Gaussian measure. One has
$$
\|D^2 \varphi\| \le \sqrt{d-1} \frac{\mbox{\rm diam(K)}}{4\int_{0}^{\frac{\pi}{2}} \sin^{\frac{4}{d-1}}x \ dx}.
$$
\item[2)]
Let $\mu = \prod_{i=1}^{d} \mu_i$, where every $\mu_i$ is a copy of a measure $\mu_0$ on $\R$ satisfying the following:
a) $\mu_0=e^{-V}dx$ with smooth $V$, b) $|V'| \le 1$, $V'' \le 1$.
For instance, one can choose $V(x)$ to be equal $|x|$ for large values of $|x|$ and quadratic for
small ones. Then
$$
\|D^2 \varphi\| \le
\sqrt{-p\Bigl( 1 + \frac{d}{4(1+p)}\Bigr)} \cdot  \frac{\mbox{\rm diam(K)}}{2\int_{0}^{\frac{\pi}{2}} \sin^{-1-\frac{1}{p}}x \ dx}.
$$
for any $-1 < p <0$.
\end{itemize}
\end{corollary}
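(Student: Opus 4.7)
The plan is to apply the theorem above to every unit direction $h \in \R^d$, identify the parameters $\Lambda$ and $M$ for the two specific measures $\mu$, control the diameter parameter $a$ by $\mathrm{diam}(K)$, and evaluate the explicit formulas for $f_{p,a}$.

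First I note a geometric reduction valid in both cases. The hyperplanes $L_1, L_2$ at $\{\langle x,h\rangle = t_0\pm a\}$ are supporting to $K$ at distance $2a$, so $2a \le \mathrm{diam}(K)$. Since $f_{p,a}$ is even and decreasing on $[0,a]$, we have $f_{p,a}(\varphi_h-t_0) \le f_{p,a}(0)$, and from the explicit formulas recalled right before the theorem,
\[
f_{p,a}(0) = \frac{a\sqrt{p}}{\int_0^{\pi/2}\sin^{1/p}x\,dx} \quad (p>0), \qquad f_{p,a}(0) = \frac{a\sqrt{-p}}{\int_0^{\pi/2}\sin^{-1-1/p}x\,dx}\quad (p<0).
\]
Combining with $a\le \mathrm{diam}(K)/2$ yields the geometric factors that appear in the corollary. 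Since the resulting bound holds for every unit $h$, it gives a bound on $\|D^2\varphi\|$.

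For part 1), $\mu = \gamma$ means $V(x)=\tfrac{|x|^2}{2}+\mathrm{const}$, so $V_{hh}\equiv 1$ and one may take $\Lambda=1$. The first inequality of the theorem with $p = (d-1)/4$ then gives
\[
\varphi_{hh} \le \sqrt{\Lambda}\,f_{(d-1)/4,a}(0) = \frac{a\sqrt{(d-1)/4}}{\int_0^{\pi/2}\sin^{4/(d-1)}x\,dx},
\]
and plugging $a \le \mathrm{diam}(K)/2$ yields exactly the stated estimate.

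For part 2), $V(x)=\sum_{i=1}^{d} V_0(x_i)$, so $V_{hh} = \sum_i V_0''(x_i)h_i^2 \le \sum_i h_i^2 = 1$ (giving $\Lambda=1$), and $|V_h| = |\sum_i V_0'(x_i)h_i| \le \sum_i |h_i| \le \sqrt{d}$ by Cauchy--Schwarz (giving $M=\sqrt{d}$). The dimension-free inequality of the theorem, applied for an arbitrary $p \in (-1,0)$, then reads
\[
\varphi_{hh} \le \sqrt{1+\tfrac{d}{4(1+p)}}\,f_{p,a}(0) = \sqrt{1+\tfrac{d}{4(1+p)}}\cdot\frac{a\sqrt{-p}}{\int_0^{\pi/2}\sin^{-1-1/p}x\,dx},
\]
and once again $a \le \mathrm{diam}(K)/2$ gives the claimed bound. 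There is no real obstacle here; the only thing to watch is that the Cauchy--Schwarz bound on $V_h$ really produces the $\sqrt{d}$ (and hence the $d$ inside the square root) that matches the claimed coefficient, and that the supremum over unit directions converts $\varphi_{hh}\le\cdots$ into a bound on $\|D^2\varphi\|$.
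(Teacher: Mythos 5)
Your proof is correct, and it is the natural route: apply Theorem 4.2 direction by direction, identify $\Lambda$ and $M$ from the product structure of $\mu$, bound the slab width $2a$ by $\mathrm{diam}(K)$, and evaluate $f_{p,a}(0)$ from the explicit relation $a\sqrt{|p|}=f(0)\int_0^{\pi/2}\sin^{\cdot}x\,dx$ displayed just before the theorem, using $f_{p,a}(\varphi_h-t_0)\le f_{p,a}(0)$ because $f_{p,a}$ is even and decreasing on $[0,a]$ and $\varphi_h\in[t_0-a,t_0+a]$ since $\nabla\varphi$ takes values in $K$. The passage from $\sup_{|h|=1}\varphi_{hh}$ to $\|D^2\varphi\|$ is valid because $D^2\varphi\ge 0$. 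The paper states this corollary without spelling out the computation; your proposal fills exactly the gap one would expect.

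One small remark on part 2): your bound $|V_h|\le\sum_i|h_i|\le\sqrt d$ is fine; you could equivalently write $|\sum_i V_0'(x_i)h_i|\le\bigl(\sum_i V_0'(x_i)^2\bigr)^{1/2}\le\sqrt d$ directly by Cauchy--Schwarz and $|V_0'|\le1$, which is the cleaner invocation. Either way the conclusion $M^2/\bigl(4(1+p)\bigr)=d/\bigl(4(1+p)\bigr)$ and the final coefficient are exactly as stated.
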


\section{Appendix : applications to functional inequalities}

The arguments presented in the following lemma have been communicated to the author by Sasha Sodin.
It turns out that convexity condition allows to avoid the use of semigroup techniques
in Proposition \ref{Stein-lem}.

\begin{lemma}
\label{Sodin-lem}
For every convex $f$ and unit vector $h$  one has
$$
|\nabla f(x+th) -f(x)|
\le \frac{2}{t} \sup_{v: |v|=1}  \Bigl( f(x+2tv) + f(x-2tv) - 2 f(x)
 \Bigr).
$$
\end{lemma}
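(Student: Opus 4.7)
The plan is to realize $w := \nabla f(x+th) - \nabla f(x)$ as a linear contribution inside the second difference $\delta_2 f(x, 2tv) := f(x+2tv) + f(x-2tv) - 2f(x)$, by pairing two convex supporting-hyperplane inequalities in an \emph{asymmetric} way, one anchored at $x$ and one at $x+th$. Once the direction $v$ is chosen to be $w/|w|$, the inner product $\langle w, v \rangle$ collapses to the pure magnitude $|w|$, and the estimate follows directly.

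If $w=0$ there is nothing to prove, so I assume $w \neq 0$ and set $v := w/|w|$. The core step is to write the two subgradient inequalities
\begin{align*}
f(x+2tv) &\ge f(x+th) + \langle \nabla f(x+th),\, 2tv - th \rangle, \\
f(x-2tv) &\ge f(x) - 2t\langle \nabla f(x),\, v \rangle,
\end{align*}
where the first uses the gradient at $x+th$ and the second uses the gradient at $x$. Adding these, subtracting $2f(x)$ from both sides, and using $\langle w, v \rangle = |w|$, one gets
$$
\delta_2 f(x, 2tv) \;\ge\; 2t|w| \;+\; \bigl[\,f(x+th) - f(x) - t\langle \nabla f(x+th),\, h\rangle\,\bigr].
$$

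The bracketed "error" term is non-positive (this is just convexity at $x+th$ applied to $x$), but it is harmless because it is also bounded \emph{below} by the size of $w$ itself: the subgradient inequality at $x$ gives $f(x+th) \ge f(x) + t\langle \nabla f(x), h\rangle$, so the bracket is $\ge -t\langle w, h\rangle \ge -t|w|$ by Cauchy--Schwarz using $|h|=1$. Combining yields $\delta_2 f(x, 2tv) \ge t|w|$, which is actually a factor of $2$ stronger than the stated bound and of course implies it after taking the supremum over unit $v$.

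The main conceptual obstacle is the asymmetric pairing in the first step: summing two subgradient inequalities anchored at the \emph{same} point would only reproduce the trivial $\delta_2 f \ge 0$, while the mixed pairing $(\nabla f(x+th),\, \nabla f(x))$ evaluated at the antipodal points $(x+2tv,\, x-2tv)$ is exactly what produces the cross-term $2t\langle w, v\rangle$. The remaining step — absorbing the unwanted directional-derivative term $t\langle \nabla f(x+th), h\rangle$ — is then forced by a second, purely $h$-directional application of convexity.
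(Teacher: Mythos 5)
Your proof is correct, and it takes a genuinely different and arguably cleaner route than the paper's. The paper first establishes a one-dimensional estimate $t\bigl(f'(x_0+t)-f'(x_0)\bigr) \le f(x_0+2t)+f(x_0-2t)-2f(x_0)$ by an integral-and-monotonicity argument, then invokes cyclical monotonicity of $\nabla f$ on a $3$-cycle $x, x+tu, x+th$ to split $\langle \nabla f(x+tu)-\nabla f(x), th\rangle$ into two purely directional terms, each controlled by the one-dimensional case — producing the constant $2$. You instead work entirely in the ambient dimension: choosing $v = w/|w|$ with $w = \nabla f(x+th)-\nabla f(x)$ and adding two subgradient inequalities anchored at the two endpoints $x$ and $x+th$ but evaluated at the antipodal points $x\pm 2tv$, you collapse the cross term directly to $2t|w|$; a second subgradient inequality at $x$, together with Cauchy--Schwarz, absorbs the spurious $h$-directional term at a cost of only $t|w|$. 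Your argument dispenses with the one-dimensional preliminary and with cyclical monotonicity, needing only the bare supporting-hyperplane inequality, and it improves the constant to $1$ (i.e., $|\nabla f(x+th)-\nabla f(x)| \le \frac{1}{t}\sup_{|v|=1}\delta_2 f(x,2tv)$), which of course implies the stated bound. The one thing the paper's route buys is the explicit one-dimensional lemma, which is a natural stepping stone but not strictly needed.
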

\begin{proof}
Let us start with $d=1$. Note that
$$
f(x_0+2t) + f(x_0-2t) - 2 f(x_0) =
\int_{-2t}^{0} \bigl( f'(x_0+s+2t) - f'(x_0 + s) \bigr)ds.
$$
Since $f'$ is increasing, one has
$$
f'(x_0+s+2t) - f'(x_0 + s) \ge f'(x_0+t) - f'(x_0)
$$
for every $-t \le s \le 0$. Thus
\begin{align*}
f(x_0+2t) + f(x_0-2t)& - 2 f(x_0)
\ge
\int_{-t}^{0} \bigl( f'(x_0+s+2t) - f'(x_0 + s) \bigr)ds
\\& \ge
\int_{-t}^{0} \bigl(f'(x_0+t) - f'(x_0)\bigr)ds
= t \bigl(f'(x_0+t) - f'(x_0)\bigr).
\end{align*}
This completes the proof of the one-dimensional case.
Let $d\ge 2$. For every couple of unit vectors $u,h$ and $t>0$ one has
by the cyclical monotonicity (see \cite{Vill})
\begin{align*}
 \langle \nabla f(x+tu), th \rangle
+
\langle \nabla f(x), tu \rangle
+ &
\langle \nabla f(x+th), 0 \rangle
\\&
 \le
 \langle \nabla f(x+th), th \rangle
+
\langle \nabla f(x), 0 \rangle
+
\langle \nabla f(x+tu), tu \rangle.
\end{align*}
Hence
$$
 \langle \nabla f(x+tu) -\nabla f(x), th \rangle
 \le
 \langle \nabla f(x+th) - \nabla f(x), th \rangle
+
\langle \nabla f(x+tu) -\nabla f(x), tu \rangle.
$$
Then it follows from the one-dimensional case that
\begin{align*}
 \langle
 & \nabla f(x+tu) - \nabla f(x), th \rangle
 \\&
 \le
 \frac{1}{t}
 \Bigl( f(x+2th) + f(x-2th) - 2 f(x)
 \Bigr)
 +
  \frac{1}{t}
 \Bigl( f(x+2tu) + f(x-2tu) - 2 f(x)
 \Bigr) .
\end{align*}
It remains to take supremum over all unit vectors $h,u$.
\end{proof}

This estimate implies generalized continuity of the optimal transport for other types
of uniform convexity.

\begin{corollary}
\label{MS-conc}
Assume that $W$ satisfies
$$
W(x+y)+W(x-y) - W(x) \ge \delta(|y|)
$$
with some non-negative increasing function $\delta$.
Then
$$
|\nabla \varphi(x) - \nabla \varphi(y) | \le 8 \delta^{-1}(4|x-y|^2).
$$
\end{corollary}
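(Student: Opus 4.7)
The plan is to recycle the maximum principle argument of Theorem~\ref{hoelder} specialized to $\mu = \gamma$, and then convert the resulting second-order estimate on $\varphi$ into a gradient estimate via Lemma~\ref{Sodin-lem}. First I would reduce to the compactly supported case by replacing $\nu$ with $\frac{1}{\nu(A)}\nu|_A$ for bounded convex $A$ and letting $A \nearrow \R^d$, exactly as in the proof of Corollary~\ref{hoelder-cor}; this ensures $|\nabla\varphi|$ is bounded, $\varphi$ is smooth, and the second differential quotient $\delta_2\varphi(x) := \varphi(x+th) + \varphi(x-th) - 2\varphi(x)$ vanishes at infinity so that it attains a maximum at some point $x_0$.

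For the Gaussian potential $V(x) = \tfrac12|x|^2 + \tfrac{d}{2}\log(2\pi)$ one has $V(x+y)+V(x-y)-2V(x) = |y|^2$, so at the maximum point $x_0$ the key inequality (\ref{V-W}) combined with the hypothesis on $W$ yields
\begin{equation*}
 t^2 \;\ge\; W(\nabla\varphi(x_0+th)) + W(\nabla\varphi(x_0-th)) - 2W(\nabla\varphi(x_0)) \;\ge\; \delta(|v|),
\end{equation*}
where $v := \nabla\varphi(x_0+th) - \nabla\varphi(x_0) = \nabla\varphi(x_0) - \nabla\varphi(x_0-th)$ by the first-order condition (\ref{maxim1}). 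Hence $|v| \le \delta^{-1}(t^2)$, and convexity of $\varphi$ gives
\begin{equation*}
 \delta_2\varphi(x_0) \;\le\; t\langle \nabla\varphi(x_0+th) - \nabla\varphi(x_0-th),h\rangle \;=\; 2t\langle v,h\rangle \;\le\; 2t\,\delta^{-1}(t^2).
\end{equation*}
Since $x_0$ globally maximizes $\delta_2\varphi$, this bound propagates to every $x \in \R^d$ and every unit vector $h$.

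Finally, I would feed this estimate into Lemma~\ref{Sodin-lem} applied to the convex function $f = \varphi$: replacing $t$ by $2t$ above gives
\begin{equation*}
 \sup_{|v|=1}\bigl(\varphi(x+2tv) + \varphi(x-2tv) - 2\varphi(x)\bigr) \;\le\; 4t\,\delta^{-1}(4t^2),
\end{equation*}
so Lemma~\ref{Sodin-lem} produces $|\nabla\varphi(x+th)-\nabla\varphi(x)| \le \tfrac{2}{t}\cdot 4t\,\delta^{-1}(4t^2) = 8\,\delta^{-1}(4t^2)$, and taking $t = |x-y|$ with $h$ a unit vector in the direction of $y-x$ yields the stated inequality. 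The only real obstacle is the first step --- the compactly supported approximation that guarantees $\delta_2\varphi$ attains a maximum and that the classical arguments of Theorem~\ref{hoelder} apply --- but this is already handled by the approximation scheme used in Corollary~\ref{hoelder-cor}; all that is new here is the substitution of the Gaussian identity $V(x+y)+V(x-y)-2V(x)=|y|^2$ into the Caffarelli-type calculation, followed by a single application of the Sodin estimate.
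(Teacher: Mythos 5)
Your proposal is correct and follows the same route as the paper: specialize the Caffarelli-type maximum-principle argument of Theorem~\ref{hoelder} to the Gaussian source (where $V(x+y)+V(x-y)-2V(x)=|y|^2$) to obtain $\varphi(x+th)+\varphi(x-th)-2\varphi(x)\le 2t\,\delta^{-1}(t^2)$, then apply Lemma~\ref{Sodin-lem}. The paper's proof is just a two-line version of exactly this; your write-up fills in the bookkeeping that the paper leaves to the reader.
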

\begin{proof}
Following the proof of Theorem \ref{hoelder}, one easily finds that
$$
\varphi(x+th) + \varphi(x-th) - 2 \varphi(x) \le 2t \delta^{-1}(t^2).
$$
The result follows from Lemma \ref{Sodin-lem}.
\end{proof}

\begin{corollary}
\label{MS}
Under assumptions of Corollary \ref{MS-conc}
 $\nu = e^{-W} dx$
 satisfies
$$
\nu\bigl( A_{r} \bigr) \ge  \Phi\Bigl(\Phi^{-1}(\nu(A)) +  \frac{1}{2} \sqrt{\delta(r/8)} \Bigr),
$$
where $\Phi(t) = \frac{1}{\sqrt{2\pi}} \int_{-\infty}^{t} e^{-\frac{s^2}{2}} \ ds$.
In particular, $\nu$ admits a dimension-free concentration
property
$$
\nu\bigl( A_{r}  \bigr) \ge  1- \frac{1}{2} \exp \Bigl( \frac{1}{8} \ \delta(r/8) \Bigr)
$$
with $\nu(A) \ge 1/2$.
\end{corollary}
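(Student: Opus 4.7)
The plan is to transport the Gaussian isoperimetric inequality to $\nu$ via the optimal map $T = \nabla\varphi$ from $\gamma$ to $\nu$, using the continuity estimate from Corollary \ref{MS-conc} to relate enlargements.

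First I would set $B = T^{-1}(A)$. Since $T$ pushes $\gamma$ to $\nu$, we have $\gamma(B) = \nu(A)$. The Gaussian isoperimetric inequality gives
$$\gamma(B_s) \ge \Phi\bigl( \Phi^{-1}(\gamma(B)) + s \bigr)$$
for every $s > 0$, where $B_s$ is the Euclidean $s$-enlargement of $B$ in $\R^d$.

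Next I would transfer this to $\nu$ using the modulus of continuity of $T$. If $x \in B_s$, pick $y \in B$ with $|x - y| \le s$; then $T(y) \in A$, and by Corollary \ref{MS-conc}
$$|T(x) - T(y)| \le 8\,\delta^{-1}(4|x-y|^2) \le 8\,\delta^{-1}(4s^2),$$
so that $T(x) \in A_{8\delta^{-1}(4s^2)}$. Hence $T(B_s) \subseteq A_{8\delta^{-1}(4s^2)}$, and
$$\nu\bigl( A_{8\delta^{-1}(4s^2)} \bigr) \ge \gamma(B_s) \ge \Phi\bigl( \Phi^{-1}(\nu(A)) + s \bigr).$$

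Now I would choose $s$ in terms of $r$ by solving $r = 8\delta^{-1}(4s^2)$, which gives $s = \tfrac{1}{2}\sqrt{\delta(r/8)}$ (since $\delta$ is increasing and non-negative, this is well-defined). Substituting yields precisely
$$\nu\bigl( A_r \bigr) \ge \Phi\Bigl( \Phi^{-1}(\nu(A)) + \tfrac{1}{2}\sqrt{\delta(r/8)} \Bigr),$$
which is the first claim. For the second claim, assuming $\nu(A) \ge 1/2$ gives $\Phi^{-1}(\nu(A)) \ge 0$, so the right-hand side is at least $\Phi\bigl( \tfrac{1}{2}\sqrt{\delta(r/8)} \bigr)$; the standard Gaussian tail bound $1 - \Phi(t) \le \tfrac{1}{2} e^{-t^2/2}$ (for $t \ge 0$) then yields the advertised concentration estimate.

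Essentially every step is routine once Corollary \ref{MS-conc} is in hand; the only point that deserves care is the elementary inversion $r \leftrightarrow s$, which requires $\delta^{-1}$ to be well-defined on the relevant range (guaranteed by monotonicity of $\delta$, possibly after passing to a generalized inverse). No step is a genuine obstacle: the real work was done in Lemma \ref{Sodin-lem} and Corollary \ref{MS-conc}, and this argument is just the standard Gaussian-isoperimetry-plus-Lipschitz-transport transference, adapted to the weaker $\delta^{-1}$ modulus of continuity.
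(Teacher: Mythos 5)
Your argument is correct and is exactly the route the paper intends: the paper's own proof is a one-line citation of the three ingredients you used (optimal Gaussian transport, the modulus-of-continuity bound of Corollary \ref{MS-conc}, and Gaussian isoperimetry together with the tail bound $\Phi(t)\ge 1-\tfrac12 e^{-t^2/2}$), and your write-up simply fills in the routine transference steps $B=T^{-1}(A)$, $T(B_s)\subseteq A_{8\delta^{-1}(4s^2)}$, and the inversion $s=\tfrac12\sqrt{\delta(r/8)}$. Note that your derivation yields the exponent $-\tfrac18\delta(r/8)$, confirming the sign that appears in the Introduction; the statement of Corollary \ref{MS} has a sign typo in the exponent.
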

\begin{proof}
Consider the optimal mapping $T$
pushing forward the standard Gaussian measure $\gamma = \frac{1}{(2\pi)^{d/2}} e^{-\frac{|x|^2}{2}} dx$
to $\nu$.
The statement follows immediately from
Corollary \ref{MS-conc}, the Gaussian isoperimetric inequality
(see \cite{Bo98}) and the estimate $\Phi(t) \ge 1 - \frac{1}{2} e^{-\frac{t^2}{2}}$.
\end{proof}

Finally, let us note that similar concentration results  can be also derived from the
 so-called above-tangent lemma.
The detailed description, references and various applications can be found in \cite{Vill},
we just mention briefly some important results.
The above-tangent estimate have been used by Talagrand \cite{Tal2} for establishing
transportation inequality for the standard Gaussian measure.
It was understood later that similar arguments
can be used for proving a broad class inequalities of Sobolev type.
In particular, the Bobkov-Ledoux result has been generalized in \cite{CGH}
(see also \cite{Kol04}). The corresponding isoperimetric inequalities
have been proved in \cite{MilSod} by localization arguments.
Finally, transportation approach to functional inequalities for non log-concave measures has been
developed in
\cite{BarKol}.

We note that there exists another  measure of convexity,
which is especially convenient when one deals with the above-tangent arguments.

\begin{remark}
Everywhere below we deal with an {\it arbitrary} (non-Euclidean) norm $\|\cdot\|$ on $\R^d$.
\end{remark}

For a convex $W$ and  $\|\cdot \|$  let us define $\delta: \R^{+} \to \R^{+}$ and $b: \R^{+} \to \R^{+}$  in the following way:
$$
\delta(t) = \inf \Bigl\{ W(x+y) + W(x-y) - 2W(x): \|y\| \ge t \Bigr\}
$$
\begin{equation}
\label{bregman}
b(t) = \inf \Bigl\{ W(x+y)  - W(x) - \langle \nabla W(x), y \rangle : \|y\| \ge t \Bigr.\}
\end{equation}

These quantities are equivalent in a sense.
\begin{lemma}
One has
$$
b(2t) -2 b(t) \ge \delta(t) \ge 2 b(t).
$$
\end{lemma}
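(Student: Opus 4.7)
My plan is to prove the two inequalities separately; both reduce to a direct identity that splits the symmetric second difference into two Bregman-type terms, combined with a rescaling trick.

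For the easy direction $\delta(t) \ge 2 b(t)$, I would fix $x$ and $y$ with $\|y\| \ge t$ and write the trivial algebraic identity
\begin{align*}
W(x+y) + W(x-y) - 2 W(x)
&= \bigl[ W(x+y) - W(x) - \langle \nabla W(x), y \rangle \bigr]\\
&\quad + \bigl[ W(x-y) - W(x) - \langle \nabla W(x), -y \rangle \bigr],
\end{align*}
which relies only on $\langle \nabla W(x), y\rangle + \langle \nabla W(x), -y\rangle = 0$. Since norms are symmetric, $\|-y\| = \|y\| \ge t$, so each bracket is bounded below by $b(t)$ by definition; taking the infimum over admissible $(x,y)$ yields the claim.

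For the other direction $b(2t) - 2b(t) \ge \delta(t)$, the idea is to evaluate the symmetric second difference at the midpoint $x + y/2$ rather than at $x$. Fix $x$ and $y$ with $\|y\| \ge 2t$, so that $\|y/2\| \ge t$. Apply the definition of $\delta$ at the point $a = x + y/2$ with increment $h = y/2$, giving
\[
W(x+y) + W(x) - 2 W(x + y/2) \ge \delta(t).
\]
Rearranging and subtracting $\langle \nabla W(x), y\rangle = 2\langle \nabla W(x), y/2\rangle$ from both sides produces the pointwise identity
\[
W(x+y) - W(x) - \langle \nabla W(x), y \rangle
\ge 2\bigl[ W(x+y/2) - W(x) - \langle \nabla W(x), y/2\rangle \bigr] + \delta(t).
\]
The bracket on the right is at least $b(t)$ because $\|y/2\|\ge t$. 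Taking the infimum over $(x,y)$ with $\|y\|\ge 2t$ on the left gives $b(2t) \ge 2 b(t) + \delta(t)$.

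I do not foresee a significant obstacle: both inequalities follow from a single algebraic decomposition of a symmetric second difference into Bregman divergences, with the only subtlety being the midpoint rescaling $y \mapsto y/2$ used for the stronger inequality. The symmetry of the norm is needed so that $\|-y\| \ge t$ in the easy direction, but this is built into the definition of a norm.
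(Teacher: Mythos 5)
Your proof is correct and follows essentially the same route as the paper's: the easy direction uses the same splitting of the symmetric second difference into two Bregman terms, and the other direction uses the same midpoint trick — evaluating $\delta$ at $x+y/2$ with increment $y/2$ and bounding $W(x+y/2)-W(x)$ by $\langle\nabla W(x),y/2\rangle+b$ — the only difference being cosmetic (you parameterize with $\|y\|\ge 2t$ to land directly on $b(2t)\ge 2b(t)+\delta(t)$, while the paper writes it at scale $t$ and halves).
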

\begin{proof}
Relation $\delta(t) \ge 2 b(t)$ follows from
\begin{align*}
W(x+y) & + W(x-y) - 2W(x)
=\\&
\Bigl( W(x+y)  - W(x) - \langle \nabla W(x), y \rangle \Bigr)
+
\Bigl( W(x-y)  - W(x) - \langle \nabla W(x), -y \rangle \Bigr).
\end{align*}
Further, note that
$$
W\Bigl(x+ \frac{y}{2}\Bigr)
-W(x) - \Bigl\langle \nabla W(x), \frac{y}{2} \Bigr\rangle
\ge b(t/2).
$$
Taking $y$ with $|y|=t$ one has
$$
\delta(t/2) \le W(x+y) + W(x) - 2 W\Bigl(x+ \frac{y}{2}\Bigr)
\le
 W(x+y)  - W(x) - \langle \nabla W(x), y \rangle -2 b(t/2).
$$
This clearly implies $\delta(t/2) + 2 b(t/2) \le b(t)$.
\end{proof}

\begin{definition}
In what follows let
\begin{itemize}
\item[1)]
$\tilde{b}$ be the
maximal convex function majorized by $b$
\item[2)]
$b^*$ be the corresponding convex conjugated function:
$$
b^*(t) = \sup_{s>0}\{ ts - b(s)\}.
$$
\end{itemize}
\end{definition}

The results of the following proposition are known (see \cite{CGH}).
For the reader convenience we just sketch the proof of 2).

\begin{proposition}
Let $W$ be a convex function such that
$\nu = e^{-W} dx$ is a probability measure. Consider a norm $\|\cdot\|$ on $\R^d$ and
define $b$ by (\ref{bregman}).
Let   $f \cdot \nu$ be a probability measure and
$T = \nabla \varphi$ be the optimal transportation mapping sending $\nu$ to $f \cdot \nu$.
Then the following holds:
\begin{itemize}
\item[1)]
 Talagrand's type inequality:
$$
 \int_{\R^d} b(\|\nabla \varphi(x) - x\|) d \nu \le \int_{\R^d} f \log f \ d\nu,
$$
\item[2)]
a modified log-Sobolev-type inequality
$$
\int_{\R^d} f \log f d\nu \le \int_{\R^d} b^{*} \Bigl( \Bigl\|  \frac{\nabla f(x)}{f(x)} \Bigr\|_{*}\Bigr) f d\nu,
$$
where $\|\cdot\|_{*}$ is the corresponding dual norm
\end{itemize}
\end{proposition}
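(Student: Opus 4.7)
The plan is to combine a refined displacement-convexity estimate for the relative entropy with Young's inequality for the pair $(b, b^*)$. Set $T_t := (1-t)\,\mathrm{id} + tT$ for $t\in[0,1]$, $\rho_t := (T_t)_{\#}\nu$, and
\[
\Phi(t) := \int \log\bigl(d\rho_t/d\nu\bigr)\, d\rho_t, \qquad \Phi(0) = 0, \qquad \Phi(1) = \int f\log f\, d\nu.
\]
The Monge--Amp\`ere equation for the interpolants $T_t$ gives the explicit formula
\[
\Phi(t) = \int \bigl[W(T_t x) - W(x)\bigr]\, d\nu(x) - \int \log\det\bigl((1-t)I + tD^2\varphi(x)\bigr)\, d\nu(x),
\]
from which $\Phi(1)$ and $\Phi'(1)$ can be read off by differentiation.

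The first and principal step is the chord inequality
\[
\Phi(1) \le \Phi'(1) - \int b\bigl(\|T(x)-x\|\bigr)\, d\nu(x).
\]
Subtracting the explicit expressions, the $W$-contribution to $\Phi(1) - \Phi'(1)$ reduces to $\int \bigl[W(T) - W - \langle \nabla W(T),\, T-x\rangle\bigr]\, d\nu$, which is at most $-\int b(\|T-x\|)\, d\nu$ by the Bregman inequality $W(x) \ge W(T) + \langle\nabla W(T),\, x-T\rangle + b(\|x-T\|)$ (a direct consequence of the definition of $b$). The determinant contribution reduces to $\int\bigl[-\log\det D^2\varphi - \mathrm{tr}((D^2\varphi)^{-1}) + d\bigr]\, d\nu$, which is pointwise non-positive by the elementary eigenvalue inequality $\log\lambda + \lambda^{-1}\ge 1$ for $\lambda > 0$ (applied to each eigenvalue of $D^2\varphi$). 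Summing the two contributions yields the claim.

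The second step identifies $\Phi'(1)$ via the Otto formula (verifiable by integration by parts starting from the direct formula for $\Phi(t)$ above):
\[
\Phi'(1) = \int \bigl\langle y - T^{-1}(y),\, \nabla\log f(y)\bigr\rangle\, f(y)\, d\nu(y).
\]
Applying Young's inequality pointwise, $\langle u, v\rangle \le \|u\|\,\|v\|_* \le b(\|u\|) + b^*(\|v\|_*)$, together with the change of variable $y = T(x)$ that converts $\int b(\|y - T^{-1}y\|) f\, d\nu$ into $\int b(\|T(x)-x\|)\, d\nu$, gives
\[
\Phi'(1) \le \int b\bigl(\|T-x\|\bigr)\, d\nu + \int b^*\bigl(\|\nabla f/f\|_*\bigr)\, f\, d\nu.
\]
Substituting into the chord inequality, the $b$-terms cancel and the desired estimate $\int f\log f\, d\nu \le \int b^*(\|\nabla f/f\|_*)\, f\, d\nu$ follows.

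The main obstacle is the first step: one must verify that the Bregman bound on the $W$-term and the eigenvalue bound on the $\log\det$-term combine precisely, so that the chord inequality holds with exact constants and without leftover contributions. Both ingredients are elementary, but their clean pairing -- matching exactly the $b$-term later produced by Young's inequality -- is the crux of the argument.
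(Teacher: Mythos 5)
Your proposal is correct, and it rests on exactly the same three elementary ingredients the paper uses: the Bregman inequality coming from the definition \eqref{bregman} of $b$, a log-det/trace eigenvalue bound, and Young's inequality for the pair $(b,b^*)$. The difference is one of packaging. The paper works entirely pointwise with the inverse map: it takes $\nabla\varphi$ sending $f\cdot\nu$ to $\nu$, writes the Monge--Amp\`ere identity $\log f = W(x)-W(\nabla\varphi)+\log\det D^2_a\varphi$, adds and subtracts $\langle\nabla W(x),x-\nabla\varphi\rangle$, bounds the resulting Bregman term by $-b(\|\nabla\varphi-x\|)$, integrates against $f\cdot\nu$, integrates by parts, discards $\mathrm{Tr}\,D^2_a\varphi-d-\log\det D^2_a\varphi\ge 0$ (i.e.\ $\lambda-1\ge\log\lambda$), and finishes with Young. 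You instead use the forward map $T:\nu\to f\cdot\nu$, introduce the displacement interpolation $T_t=(1-t)\mathrm{id}+tT$ and the entropy along it, derive the ``chord inequality'' $\Phi(1)\le\Phi'(1)-\int b(\|T-x\|)\,d\nu$ (which, unpacked, is precisely the above-tangent estimate, using now the companion bound $\log\lambda\ge 1-1/\lambda$ because the roles of $D^2\varphi$ and $(D^2\varphi)^{-1}$ are swapped), identify $\Phi'(1)$ via Otto's formula (equivalent to the paper's integration by parts), and cancel the $b$-terms via Young. So the calculation is the same up to the direction of transport and the choice between the two standard logarithm inequalities; your interpolation framing makes the role of the entropy derivative transparent, while the paper's pointwise decomposition is shorter. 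Note also that, like the paper, you only prove the modified log-Sobolev inequality (part 2); the Talagrand estimate (part 1) would require a separate argument, e.g.\ bounding $\Phi(1)$ from below directly rather than via the chord inequality, but the paper itself only sketches part 2).
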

\begin{proof}
Let as use the above-tangent arguments.
Let $T = \nabla \varphi$ be the optimal transportation sending $f \cdot \nu$ to
$\nu$. By the change of variables formula
$$
0 = W(x) - W(\nabla \varphi) + \log \det D^2_a \varphi -\log f(x),
$$
where $D^2_a \varphi$ is the second Alexandroff derivative of $\varphi$ (see \cite{Vill} for details).
Hence
\begin{align*}
\log f = &  W(x) - W(\nabla \varphi) + \log \det D^2_a \varphi
=
\\&
W(x) - W(\nabla \varphi) - \langle \nabla W(x), x-\nabla \varphi \rangle
\\&
+\langle \nabla W(x), x-\nabla \varphi \rangle
+ \log \det D^2_a \varphi
\\&
\le - b(\|\nabla \varphi(x) - x\|) + \langle \nabla W(x), x-\nabla \varphi \rangle
+ \log \det D^2_a \varphi.
\end{align*}
Let us integrate this formula  with respect to $f \cdot \nu$. Integration by parts
and convexity properties of $\varphi$
give
\begin{align*}
\int_{\R^d} f \log f d \nu
&
\le
- \int_{\R^d} b(\|\nabla \varphi(x) - x\|) f d\nu
+ \int_{\R^d} \langle \nabla f(x), x - \nabla \varphi \rangle d \nu.
\\&
- \int_{\R^d} \Bigl( \mbox{Tr} D^2_a \varphi - d -  \log \det D^2_a \varphi \Bigr) d\mu.
\end{align*}
It is well-known and easy to verify that $\mbox{Tr} D^2_a \varphi - d -  \log \det D^2_a \varphi \ge 0$.
Applying   $$\Bigl\langle \frac{\nabla f(x)}{f(x)}, x - \nabla \varphi \Bigr\rangle \le
\|x-\varphi(x)\| \cdot \Bigl\|  \frac{\nabla f(x)}{f(x)} \Bigr\|_{*}$$
and  the Young inequality one completes the proof.
\end{proof}

Finally, the following concentration inequality
$$
\nu(A_r) \ge  1- 2 e^{-2 \tilde{b}(r/2)},
$$
$$
 \  \nu(A) \ge \frac{1}{2}, \ A_r = \{y : \|x-y\| \ge r\}
$$
can be easily verified by a modification of Marton's arguments.
Let
 $T_1= \nabla \varphi_1$, $T_2 = \nabla \varphi_2$
be a couple of optimal transportation mappling pushing forward $\nu$ to
$\nu_1 = \frac{1}{\nu(A)} \nu|_{A}$ and $\nu_2 = \frac{1}{\nu(A^c_r)} \nu|_{A^c_r}$
respectively. Then $T = T_2 \circ T_1^{-1}$
sends
$\nu_1$  to $\nu_2$. One has
$$
\tilde{b}(r/2)
\le
\int_{\R^d} \tilde{b}\Bigl(\frac{1}{2} \|x-T(x)\|\Bigr)  d\nu_1
=
\int_{\R^d} \tilde{b}\Bigl(\frac{1}{2} \|T_1-T_2\|\Bigr)  d\nu.
$$
Hence by convexity of $\tilde{b}$ and the Talagrand's type estimate
\begin{align*}
2 \tilde{b}(r/2)
&
\le
\int_{\R^d} \tilde{b}\Bigl(\|T_1(x) - x\|\Bigr)  d\nu
+
\int_{\R^d} \tilde{b}\Bigl( \|T_2(x)-x\|\Bigr)  d\nu
\\&
\le
\log \Bigl( \frac{1}{\nu(A)} \Bigr)
+
\log \Bigl( \frac{1}{\nu(A^c_r)} \Bigr).
\end{align*}
Hence
$$
e^{2 \tilde{b}(r)}
\le \frac{1}{\nu(A) \nu(A^c_r)}.
$$

This work was supported by the RFBR project 07-01-00536,
 GFEN-06-01-39003,
RF President Grant MD-764.2008.1,
DFG Grant 436 RUS 113/343/0(R).

\end{document}